\documentclass[11pt]{amsart}
\usepackage{a4wide}
\usepackage{amsmath,amssymb,amsfonts,amsthm,dutchcal}
\usepackage{tikz}
\usetikzlibrary{decorations.pathreplacing}
\usepackage{color}
\usepackage{graphicx}

\usepackage{cancel}

\usepackage[colorlinks=true,urlcolor=blue,
citecolor=red,linkcolor=blue,linktocpage,pdfpagelabels,
bookmarksnumbered,bookmarksopen]{hyperref}


\theoremstyle{plain}

\newtheorem{theorem}{Theorem}[section]
\newtheorem{lemma}[theorem]{Lemma}
\newtheorem{proposition}[theorem]{Proposition}

\newtheorem{remark}[theorem]{Remark}
\newtheorem{definition}[theorem]{Definition}
\theoremstyle{definition}
\numberwithin{equation}{section}

\def\Om{\Omega}
\def\R{\mathbb{R}}
\def\S{\mathbb{S}}

\def\dist{\textup{dist}}

\def\H{\mathcal{H}}

\def\C{\mathcal{C}}
\def\Co{\mathbf{C}}
\def\J{J_{\lambda,\Co}}
\def\Chi#1{\hbox{{\large $\chi$}{\Large $_{_{#1}}$}}}

\newcommand{\res}{\mathop{\hbox{\vrule height 7pt width .5pt depth 0pt
\vrule height .5pt width 6pt depth 0pt}}\nolimits}

\newcommand{\wstar}{\overset{\!\!*}\rightharpoonup}

\newcommand{\e}{\varepsilon}

\newcommand{\pa}{\partial}

\newcommand{\medint}{-\kern -,375cm\int}
\newcommand{\medintinrigo}{-\kern -,315cm\int}
\newcommand{\wto}{\rightharpoonup}

\def\beq{\begin{equation}}
\def\eeq{\end{equation}}

\title[Capillary energy outside convex cylinders]{The isoperimetric inequality for the capillary energy outside convex cylinders}

\author{Nicola Fusco}

\author{Vesa Julin}

\author{Massimiliano Morini}

\begin{document}

\begin{abstract} 
 We study the isoperimetric problem for capillary surfaces with a general contact angle \( \theta \in (0, \pi) \), outside convex infinite cylinders with arbitrary two-dimensional convex section. We prove that the capillary energy of any surface supported on any such convex cylinder is strictly larger than that of a spherical cap with the same volume and the same contact angle on a flat support, unless the surface is itself a spherical cap resting on a facet of the cylinder. In this class of convex sets, our result extends for the first time the well-known Choe-Ghomi-Ritoré relative isoperimetric inequality, corresponding to the case \( \theta = \pi/2 \), to general angles.
\end{abstract}

\maketitle

\section{Introduction}

Let $\Co \subset \R^N$ be a closed convex set with nonempty interior. Given a set of finite perimeter $E\subset\R^N\setminus\Co$ and $\lambda\in(-1,1)$ we define the capillary energy as 
\[
\J(E):=P(E; \R^N \setminus \Co) - \lambda \H^{N-1}(\pa^*E \cap \pa \Co).
\]
Here, for any Borel set $G$, $P(E;G)=\H^{N-1}(\pa E^*\cap G)$  and $\pa^*E$ is the reduced boundary of $E$ (for the definitions and the relevant properties see \cite{AmbrosioFuscoPallara00, Maggi12}).
The  capillary energy has a natural physical motivation as it models a liquid  drop supported  on a given substrate and we refer to \cite{Finnbook} for   a comprehensive introduction to the topic.

For every  $v >0$ we consider the isoperimetric problem 
\begin{equation} 
\label{def:min-prob}
I_{\Co}(v) := \inf\{ \J(E):\,\,E \subset  \R^N \setminus \Co, \, |E| =v \}.
\end{equation}
 When the convex set $\Co$ is bounded the problem \eqref{def:min-prob} has  a minimizer, and if $\Co$ is in addition  smooth, then the minimizer is  smooth up to a small singular set  and the free boundary $\pa E \setminus \Co$ meets the surface $\pa \Co$ with an angle  given  the classical Young's law  \cite{Taylor77, De-PhilippisMaggi15}. We also mention the recent work  related to Allard type regularity for  critical sets  \cite{gasparetto}. When the convex set is unbounded the problem \eqref{def:min-prob} might not admit  a minimizer. This happens for instance  when $\Co=\C\times \R\subset \R^3$, with $\C$ the epigraph of a parabola. In this case, as a consequence of our main Theorem~\ref{thm1}, minimizing sequences  slide upwards to infinity along the boundary of $\Co$ and the isoperimetric profile \eqref{def:min-prob}  agrees with the profile given by the half-space. In the case $\lambda = 0$  the problem for unbounded general convex sets $\Co$  is studied in \cite{FMMN}.

 The issue we want to address  here is to find the convex sets $\Co$ for which the value of \eqref{def:min-prob} is the smallest.  
In the case $\lambda= 0$ the problem reduces to  the relative isoperimetric inequality outside convex sets proven by Choe-Ghomi-Ritor\'e in \cite{ChGhoRi07}:  using the tools developed in  \cite{ChGhoRi06} they show that the half-space gives the lowest value for \eqref{def:min-prob} . On the other hand,  rather surprisingly  the capillary case with general $\lambda \neq 0$ has remained completely open until now as  all the methods devised for the relative isoperimetric problem do not seem to be applicable to  \eqref{def:min-prob}. Here, we solve it in the case of infinite convex cylinders. In fact, our result holds in every dimension  for convex sets, whose normals span  a two-dimensional plane.  

In order to state our main result we denote the half-space by $\textbf{H} = \{ x \in \R^N : x_N < \lambda \}$  and by $B^\lambda_r$  the solid spherical cap 
\[
B^{\lambda}_r  = \{ x \in B_r : x_N > \lambda\}.
\]
Given $v>0$, we let  $B^\lambda[v] = B^\lambda_r$ denote the spherical cap with radius $r$ such that  $|B^\lambda[v]|=v$. Our main result is the following. 
%


\begin{figure}[h!]
\includegraphics[scale=0.25]{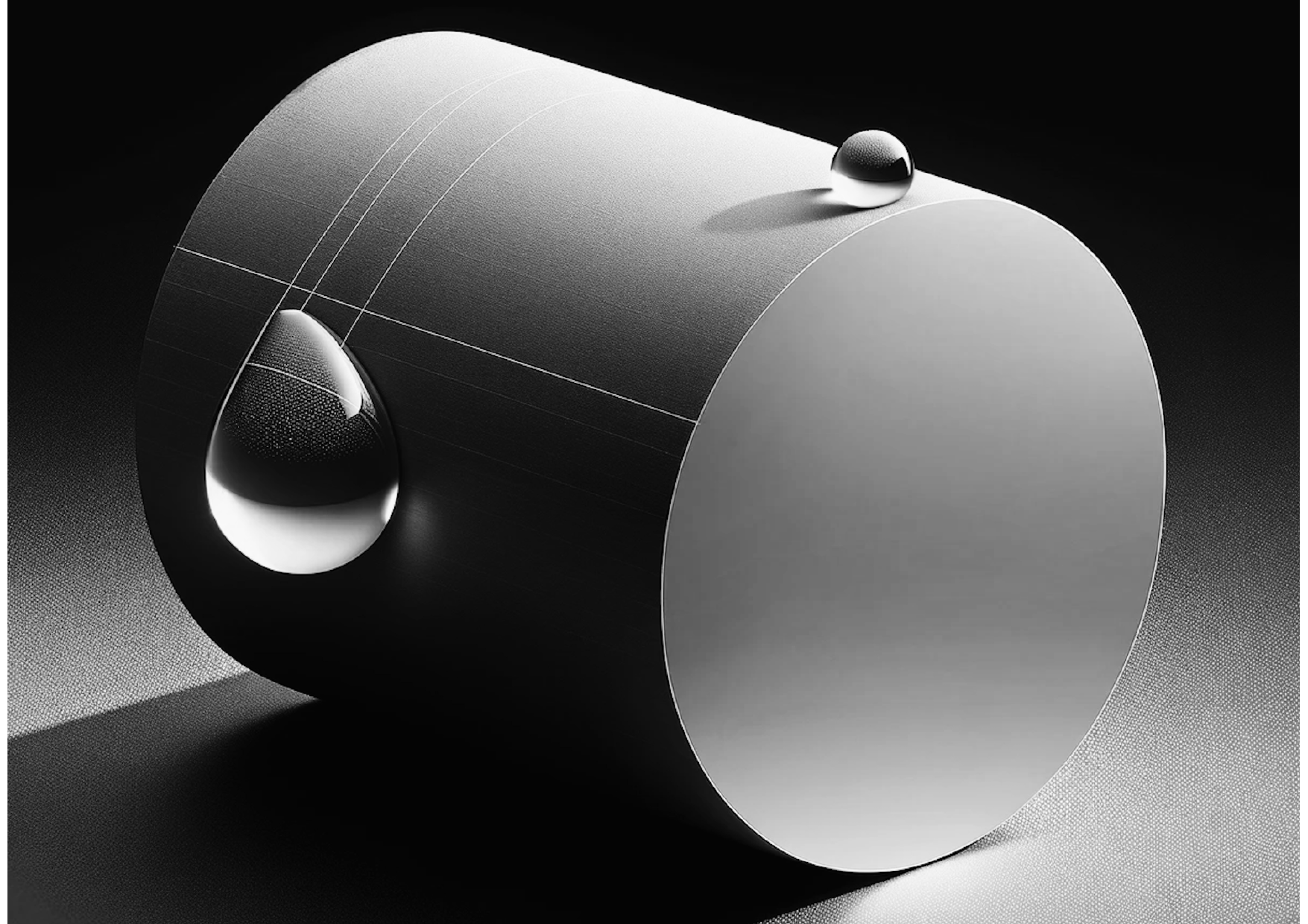}
\caption{Droplets supported on a convex cylinder}
\label{fig-cylinder}
\end{figure}

\begin{theorem}
\label{thm1}
Let  $\lambda \in (-1,1)$ and let $\Co$ be of the form $\C\times\R^{N-2}$, where $\C\subset\R^2$ is a closed convex set  of the plane with nonempty interior. \footnote{For the case of a convex set with empty interior, see Remark~\ref{empty}.} For every set of finite perimeter $E\subset\R^N\setminus\Co$ such that $|E|=v$ we have
\beq\label{main1}
\J(E)\geq J_{\lambda,\textbf{H}}(B^\lambda[v]).   
\eeq
Moreover the equality holds if and only if $E$ sits on a facet of $\Co$ and  is isometric to  $B^\lambda[v]$.
\end{theorem}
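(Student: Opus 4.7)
My plan is to prove Theorem~\ref{thm1} in three stages. First, reduce to a more regular geometry by approximation: for instance, approximate $\C \subset \R^2$ by smooth strictly convex sets or by convex polygons $\C_k \supset \C$ with $\C_k \to \C$ in Hausdorff distance, and transfer the inequality to the limit via lower semicontinuity of the capillary functional combined with suitable upper bounds from constructed competitors. Second, take as input the classical capillary isoperimetric inequality outside a half-space, $J_{\lambda,\textbf{H}}(E) \geq J_{\lambda,\textbf{H}}(B^\lambda[v])$, proved for instance by calibration with a divergence-free vector field modeled on the spherical cap.

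\textbf{Core argument and rigidity.} The main step is to compare $\J(E)$ with a half-space energy. Given $E \subset \R^N \setminus \Co$ with $|E|=v$, exploit convexity: at each $x \in \pa \Co$, the supporting half-space $\textbf{H}_x$ contains $\Co$, so $\R^N \setminus \textbf{H}_x \subset \R^N \setminus \Co$. The goal is to construct a volume-preserving rearrangement $F$ of $E$ lying outside a suitable $\textbf{H}_{x_0}$, satisfying
\[
\J(E) \;\geq\; J_{\lambda,\textbf{H}_{x_0}}(F),
\]
so that the half-space input concludes \eqref{main1}. In the $\lambda=0$ case, the Choe-Ghomi-Ritor\'e convex-hull/nearest-point projection argument provides such an $F$; for $\lambda \neq 0$, one needs a weighted variant. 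Equality then propagates backward: the rearrangement is an isometry only if $\Co$ coincides locally with $\textbf{H}_{x_0}$ on the support of $E$, which forces $E$ to sit on a facet of $\Co$; and equality in the half-space inequality forces $E$ to coincide with $B^\lambda[v]$ up to a translation. The rigidity in the limiting geometry of Step~1 must then be extracted by checking that the condition ``$E$ rests on a facet'' is stable under the Hausdorff approximation used there.

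\textbf{Main obstacle.} The principal difficulty, and the reason the case $\lambda \neq 0$ has remained open, is the construction of the rearrangement. The Choe-Ghomi-Ritor\'e proof for $\lambda = 0$ relies on the fact that the wetted contribution drops out, so the convex-hull construction (which effectively converts the wetted region into free boundary) only improves the inequality. For $\lambda \neq 0$, the signed term $-\lambda\,\H^{N-1}(\pa^* E \cap \pa \Co)$ must be preserved or carefully controlled under the rearrangement, and no standard geometric projection naturally does this. The resolution should exploit the hypothesis that the normals to $\pa \Co$ span only a $2$-dimensional plane: one would design a rearrangement acting nontrivially only in those two dimensions, respecting the $\R^{N-2}$ translation-invariance of $\Co$, thereby reducing the delicate balance between free and wetted terms to an essentially planar estimate, possibly closed by an ODE/convexity argument on the cap profile under perturbations of the $2$-dimensional curvature of $\pa \C$.
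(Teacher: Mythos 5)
There is a genuine gap: the core of your argument is never actually supplied. Your ``core argument'' is to construct a volume-preserving rearrangement $F$ of $E$ lying outside a supporting half-space $\textbf{H}_{x_0}$ with $\J(E)\geq J_{\lambda,\textbf{H}_{x_0}}(F)$, but you do not construct it; you only state that for $\lambda\neq 0$ ``one needs a weighted variant'' and that the resolution ``should'' exploit the two-dimensionality of the normals, ``possibly closed by an ODE/convexity argument.'' That missing construction \emph{is} the theorem. Worse, the direction you point in is the one that is known to fail: the Choe--Ghomi--Ritor\'e mechanism (normal cones / total positive curvature of the free boundary, not in fact a rearrangement onto a half-space) controls only $\pa E\setminus\Co$ and leaves the wetted region $\pa E\cap\pa\Co$ invisible, so the signed term $-\lambda\H^{N-1}(\pa^*E\cap\pa\Co)$ cannot be tracked through any such projection. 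No rearrangement of this type is known, and nothing in your sketch indicates how the balance between free and wetted area would be preserved. Your rigidity discussion inherits the same gap, since it is conditioned on the rearrangement being an isometry.

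For contrast, the actual proof proceeds by an ABP argument rather than by rearrangement: one solves the Neumann problem $\Delta u=c$ in $\Omega$ with $\pa_\nu u=1$ on the free boundary and $\pa_\nu u=-\lambda$ on the wetted region, shows the variational solution is a viscosity supersolution (including at the contact set $\gamma$), and reduces the key measure estimate $|\mathcal A_u\cap B_1|\geq|B^\lambda|$ to a purely geometric ``$\lambda$-ABP property'' of $\Co$ concerning convex partitions of $\R^N$ by subdifferentials of functions on $\pa\Co$. The cylindrical hypothesis $\Co=\C\times\R^{N-2}$ enters exactly where you guessed it should, but in a completely different way: it allows a slicing of that convex-partition estimate down to a one-dimensional inequality on $\pa D\subset\R^2$, which is then proved by induction on the number of cells. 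Rigidity follows from equality in the arithmetic--geometric mean inequality forcing $\nabla^2u=I$, hence $u=\tfrac12|x-x_0|^2$ and $\Omega=B_1(x_0)\cap H$. Your Step~1 (approximation by smooth convex cylinders and regular sets, with stability of the equality case) does match the paper's Lemma on approximation, but the heart of the proof is absent from your proposal.
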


Note that in the physical case $N=3$ the above theorem holds for all convex infinite cylinders with arbitrary two-dimensional section. We highlight also that we do not assume any regularity on $\Co$. In particular, the theorem above applies to the case where $\Co$ is an infinite wedge and shows  that the capillary energy of a droplet sitting outside a  wedge and wetting its ridge has energy strictly larger than a spherical cap lying on a flat surface, a fact that, to the best of our knowledge, was not proven before.

Instead, the capillary isoperimetric problem \emph{inside} a convex wedge was studied in \cite{lopez} where it is  proved that  the minimizer of the capillary energy is a spherical cap centered at the ridge of the wedge. The same result holds also for critical points, as a consequence of the generalized  Heinze-Karcher inequality proven in \cite{JWXZ}. Instead, Theorem \ref{thm1} implies that the capillary isoperimetric problem  {\em outside}  a convex wedge has  the opposite behavior, since the minimizer is a spherical cap sitting on either facet of the wedge away from the ridge.

As we already mentioned,  the case $\lambda =0$ of  Theorem~\ref{thm1} the problem \eqref{def:min-prob}  is the relative  isoperimetric  inequality  due to  Choe-Ghomi-Ritor\'e \cite{ChGhoRi07}, see also   \cite{FM2023} for  the rigidity, i.e., the characterization of the  equality case for general, possibly nonsmooth convex sets. We also refer to
\cite{LiuWangWeng}  for an alternative proof of the same  inequality and
 to \cite{krummel} for the problem in higher  codimension.

In order to prove  Theorem \ref{thm1} we need to introduce some novel methods, that will be explained in more details in Section~\ref{sec:overview} below. Indeed, the approach  based on normal cones developed in  \cite{ChGhoRi06, ChGhoRi07} for the case $\lambda =0$ (and further refined in  \cite{FM2023})  gives only information on the free boundary $\pa E \setminus \Co$, while the contact region $\pa E \cap \pa \Co$ remains invisible. In order to overcome this,  we adapt  to  the capillary problem the  ABP-method, originally introduced by  Cabr\'e  for the standard isoperimetric inequality. This approach already appears in   \cite{LiuWangWeng}  for $\lambda = 0$ and here we develop it in the case $\lambda\not=0$.  However, in order to apply the  ABP-method one needs a subtle estimate on the set of subdifferentials of the function $u$ solving problem \eqref{eq:neumann-0} below, as we will explain in the next subsection. This difficulty appears already in the case $\lambda=0$ and the authors in \cite{LiuWangWeng} overcome it by rewriting  the aforementioned estimate in terms of suitable restricted normal cones to the graph of $u$ and by using some of the results proved in \cite{ChGhoRi06}. This argument does not seem to generalize to the case $\lambda \neq 0$. 

Instead of relying on normal cones, we first prove that  the solution  to the Neumann problem  \eqref{eq:neumann-0}  is a viscosity supersolution of the same problem and exploit this property  to  study directly the geometry of  its subdifferentials. 
Since the subdifferentials of a function are convex and their union is the whole space, the new point of view combined with a  discretization procedure  leads us  to reformulate the aforementioned estimate in terms of an estimate on finite convex partitions of the space made up of  subdifferentials of discrete  functions. This seems to be a rather complicated combinatorial problem that for $\lambda\not=0$ we can solve only in the planar case, allowing us to treat  convex cylinders of the form $\C\times\R^{N-2}$. 
We note however that  this method gives an easy proof of the inequality \eqref{eq:crucial-est} and thus of \eqref{def:min-prob} for all convex sets $\Co$ when $\lambda = 0$.
 Finally, we conjecture that \eqref{main1} holds for all convex sets.

We next give an overview of the ABP-argument which we use in the  proof of Theorem \ref{thm1}.
  
\subsection{Overview of the proof}\label{sec:overview}

The proof of Theorem \ref{thm1} is based on the ABP-method applied to Neumann problem \eqref{eq:neumann-0}. We note that in the context of isoperimetric problems this method was used first time by  Cabr\'e in  \cite{cabre2000, cabre2008} and further generalized in \cite{CRS2016}. In this paper we extend this method to the  capillary isoperimetric problem. Let us sketch the proof and outline the main challenges of the argument. 

By scaling we may reduce to the case $|E| = |B^\lambda|$.  Assume for simplicity that the set $E$ is regular in which case we denote it by $E=\Omega$. To be more precise we assume  that $\Omega \subset \R^N\setminus\Co$ is a Lipschitz regular open set with $|\Omega| = |B^\lambda|$, such that $\Sigma = \pa \Omega  \setminus\Co$   and $\Gamma = \pa \Omega \cap \Co$ are smooth embedded manifolds with a common boundary  denoted  by $\gamma$. Let $u :\bar \Omega \to \R$ be the solution of the Neumann boundary problem  
\begin{equation}\label{eq:neumann-0} 
\begin{cases}
&\Delta u = c \quad \text{in } \, \Omega \\
&\pa_\nu u = 1 \quad \text{on } \, \Sigma\\ 
&\pa_\nu u = -\lambda  \quad \text{on } \, \Gamma,  
\end{cases}
\end{equation}
where $\lambda\in(-1,1)$ and the constant 
\begin{equation}\label{eq:compatibility} 
c = \frac{\H^{N-1}(\Sigma) - \lambda \H^{N-1}(\Gamma)}{|\Omega|} = \frac{\J(\Omega )}{|\Omega|}
\end{equation}
is the one prescribed by the compatibility condition. We point out that in the case  $\Omega = B^\lambda$  up to an additive constant  $u(x) = \frac12 |x|^2$ and $c = N$.  

Let us denote the convex envelope of $u$ by $\hat u$, i.e., the largest convex function that is below $u$. Note that if $x \in \Omega$ is such that $u(x) = \hat u(x)$, then the \emph{subdifferential} of $u$ at $x$ is non-empty
\[
J_{\bar \Omega}u(x) = \{ \xi \in \R^N : u(y) -u(x) \geq \xi \cdot (y-x) \quad \text{for all } \, y \in \bar \Omega\} \neq \emptyset 
\]
and since $u$ is smooth in $\Omega$ it holds $\xi = \nabla u(x)$ and $\nabla^2 u(x) \geq 0$.  We denote the set of points for which $J_{\bar \Omega}u(x) $ is non-empty by $\Omega^+$ and the  union of  subdifferentials by $\mathcal A_u$ 
\[
\mathcal A_u = \bigcup_{x \in \Omega}J_{\bar \Omega}u(x). 
\]
In order to carry one the ABP-argument one needs to show the following crucial estimate 
\begin{equation}\label{eq:crucial-est} 
|\mathcal A_u| \geq |B^\lambda| 
\end{equation}
by somehow exploiting the Neumann boundary conditions in \eqref{eq:neumann-0} and the geometry of $\Co$.

Once \eqref{eq:crucial-est} is proven, the claim then follows  by using  the Area Formula, the arithmetic-geometric mean inequality and recalling the value of $c$ in \eqref{eq:compatibility} 
\[
\begin{split}
|\Omega|=|B^\lambda|\leq |\mathcal A_u| =  |\nabla u(\Om^+)| \leq \int_{\Omega^+}\det\nabla^2u \, dx \leq \int_{\Omega^+}\frac{(\Delta u)^N}{N^N}\, dx \leq \left( \frac{\J(\Omega )}{|\Omega| N}\right)^N |\Omega|. 
\end{split}
\]
The above chain of inequalities gives the conclusion as 
\[
\J(B^\lambda) = N |B^\lambda| = N|\Omega|. 
\]
The case of a general  set of finite perimeter $E$ follows by  an approximation argument. In fact, a more refined approximation argument allows us also  to characterize the case of equality, see Lemma~\ref{appro}.

It is then clear that the most relevant estimate is \eqref{eq:crucial-est} which turns out to be challenging to prove. Indeed, as observed in \cite{LiuWangWeng}  the inclusion  $B^\lambda \subset  \mathcal A_u$ does not hold  in general and we need to develop a more subtle argument to overcome  the problem. The same estimate was already  proven in \cite{LiuWangWeng}  in the case  $\lambda = 0$ by reformulating the problem in terms of suitable restricted normal cones to the graph of $u$ in the spirit of  \cite{ChGhoRi06}. However their  argument does not generalize to the case $\lambda \neq 0$. 

In order to deal with  the case of  general $\lambda$'s, we   develop the following novel argument. We first show that the (variational) solution to \eqref{eq:neumann-0}  is a viscosity supersolution  to the same problem, in the sense of Definition~\ref{def:visco}. Note that the latter definition is stronger than the one given in  \cite{dupuisishii} and therefore the supersolution property in the above sense does not follow from known results. Using this property we are able to relate the subdifferentials of $u$ in $\Omega$ with the subdifferentials of the restriction of $u$ to $\Gamma$  proving the following inclusion, see Lemma~\ref{lem:u-gamma},
\[
\mathcal B_{u_{\Gamma}}^\lambda \cap B_1 \subset\mathcal A_u \cap B_1,
\]
where $\mathcal B_{u_{\Gamma}}^\lambda=\bigcup_{x\in \Gamma} \{ \xi \in J_{\Gamma} u(x)  : \,\xi\cdot\nu_\Co(x)> \lambda \}$, $ J_{\Gamma} u(x)$ is the subdifferential at $x$ of the restriction of $u$ to $\Gamma$ and $\nu_\Co(x)$ stands for the outer normal to $\Co$ at $x$. This inclusion leads to the proof of \eqref{eq:crucial-est}  provided we are able to show that
\beq\label{introcr}
\qquad\qquad\qquad\qquad |\mathcal B_{v}^\lambda\cap B_1|\geq|B^\lambda| \qquad \text{for any $v:K\to\R$, with $K\subset\pa\Co$ compact.}
\eeq
 In fact,  it is enough to prove \eqref{introcr} only for discrete sets $K\subset\pa\Co$, see Lemma~\ref{lem:ABP}. Note that property \eqref{introcr} has nothing to do  with Neumann problem \eqref{eq:neumann-0} and it only depends on the geometry of  $\Co$. We call this property \emph{$\lambda$-ABP property}, see Definition \ref{def:ABP}.  It can be easily shown that any convex sets satisfies the $0$-ABP property, see  Proposition~\ref{prop:0ABP}.  As already mentioned, in this way we obtain a new simple proof of \eqref{eq:crucial-est} (and thus of the Choe-Ghomi-Ritor\'e  isoperimetric inequality) in the case $\lambda=0$. 
 
 The case $\lambda\not=0$ is considerably more difficult. In fact, proving the $\lambda$-APB property is equivalent to show that, given any convex partition $A_1,\dots,A_k$ of $\R^N$, with $A_i$ the subdifferential at $x_i$ of a function $v:\{x_1,\dots,x_k\}\subset\pa\Co\to\R$, then
\beq\label{introcr1}
 \sum_{i=1}^k\big|A_i\cap\{\xi\in B_1:\,\,\xi\cdot\nu_i>\lambda\}\big|\geq|B^\lambda|,
 \eeq
 where $\nu_i$ is the exterior normal to $\Co$ at $x_i$. The proof of the above inequality is a difficult combinatorial problem, and we are able to show it  only  when all the normals to $\Co$ lie in a 2-dimensional plane, that is in the case $\Co=\mathcal C\times\R^{N-2}$. Indeed, in this case, by a slicing argument we are able to reduce the proof of \eqref{introcr1} to a similar estimate for convex partitions in the plane. The proof of the latter, which is the content   of Section~\ref{sec:3.1}, is still very complicated and we achieve it by  studying the left hand side of \eqref{introcr1} as a function of $\lambda$ by means of geometrical and analytical arguments.

\section{The ABP approach for the capillary energy}

In this section we set up the tools that we need for the  ABP argument.  To this aim, in Section~\ref{sec:neu}  we establish  the crucial viscosity supersolution property for  the variational solutions of the Neumann problem \eqref{eq:neumann-0}. In  Section~\ref{sec:subdif} we introduce a useful notion of restricted subdifferential which enables us to  reduces the inequality \eqref{eq:crucial-est}  to a property of the convex set $\Co$. As we already mentioned, we call it  $\lambda$-ABP property and give its definition in Section~\ref{sec:lambdaABP}  (Definition \ref{def:ABP}). At the end of the section we establish the relative isoperimetric inequality for the $\lambda$-capillary functional outside the convex sets that satisfy such a property.

As we mentioned in the introduction, we will first prove the relative isoperimetric inequality for regular sets. In order to emphasize this  we always denote $E = \Omega$ when the set is assumed to be regular, i.e., it satisfies 
\beq\label{Coreg}
 \text{$\Co\subset\R^N$ is a  closed convex set  of class $C^{2}$} 
 \eeq
 and  \beq\label{Omreg} 
\begin{split}
&\text{$\Om\subset\R^N\setminus\Co$ is  a bounded Lipschitz set such that  $\Sigma:=\pa\Om\setminus\Co$ } \\
&\qquad\qquad\qquad \text {is a $(N-1)$-manifold with boundary of class $C^{2}$}.
\end{split}
\eeq 
We call the boundary $\Sigma$ the \emph{free interface} and denote $\Gamma:=\pa\Om\cap \Co$, which we call the \emph{wetted region} which is also an embedded  $C^2$-regular $(N-1)$-manifold with boundary. Note that  $\Gamma$ and $\Sigma$ share the same boundary, which we denote by $\gamma:=\overline \Sigma\cap \Co$  and which  by the assumption is a $(N-2)$-manifold of class $C^{2}$. We call  $\gamma$  the \emph{contact set} of $\Sigma$ with $\Co$. Moreover, we will throughout the section  assume  $|\Omega| = |B^\lambda|$ if not otherwise mentioned. 

We will denote by $\nu_\Om$ and $\nu_\Co$ the outer unit normal to $\pa \Om$ and to $\pa \Co$, respectively. We also denote by  $\nu_\Sigma=\nu_\Om$ the outer unit normal field on  $\Sigma$, which by our assumptions admits a continuous  extension at $\gamma$, still denoted by $\nu_\Sigma$. We also set  $\nu_\Gamma=-\nu_\Co$ on $\Gamma$. Note that the Lipschitz regularity of $\Om$ yields 
 \[
 \nu_\Gamma \cdot \nu_\Sigma> -1\qquad\text{on }\gamma.
 \]
Finally, we define the $\e$-neighborhood of a generic set $F \subset \R^N$ as $B_\e + F = \{ x \in \R^N : \text{dist}(x,F)< \e\}$. 

\subsection{The Neumann problem}\label{sec:neu}

In this section we consider the Neumann problem under the assumptions  \eqref{Coreg} and \eqref{Omreg}. We may also assume without loss of generality $\Omega$ to be connected. Note that even in this case $\Omega$  is still merely a Lipschitz domain and therefore the high regularity of $u$ up to boundary is not granted.  However, it turns out that we only need the solution to attain the boundary values in the viscosity sense, for which H\"older continuity up to the boundary is enough. To this aim we consider the  variational solution of the problem \eqref{eq:neumann-0} which by definition is  a function $u\in H^1(\Om)$ such that  it holds 
\beq\label{eq:general}
\int_{\Omega} \nabla u \cdot \nabla \varphi\,dx= -\int_\Om c \varphi\,dx+\int_{\pa\Om}g\varphi\,d\H^{N-1}  
\eeq
for all $\varphi\in H^1(\Omega)$, where $c$ is given by \eqref{eq:compatibility},  
\beq\label{g}
g \equiv 1\text{ on  } \Sigma \quad\text{and}\quad g \equiv -\lambda \quad\text{on } \Gamma \setminus \gamma.
\eeq
 Since $\Omega$ is bounded and Lipschitz regular,  $g$ is bounded and we have the  following compatibility condition 
\[
c |\Omega |   =-\int_{\pa\Omega}g\,d\H^{N-1},
\] 
there exists a unique  (up to an additive constant)  variational solution of \eqref{eq:neumann-0}. Moreover,  by standard elliptic regularity theory the variational solution is H\"older continuous up to the boundary,  see for instance \cite{Nittka}.

Let us proceed to the notion of viscosity solution. In fact, it turns out that we need only the concept of viscosity supersolution for the ABP-argument and therefore we reduce only to that. Here is the definition we need. 
\begin{definition}
\label{def:visco}
A lower semicontinuous function $u : \overline \Omega \to \R$ is a \emph{viscosity supersolution}  of \eqref{eq:neumann-0} if  whenever $u-\varphi$ has a local minimum at $x_0\in \overline\Om$ for  $\varphi \in C^2(\R^N)$,  then
\beq\label{eq:visco}
\begin{cases}
-\Delta \varphi(x_0) \geq -c & \text{if } \, x_0 \in \Omega,\\
\partial_{\nu_\Sigma}\varphi(x_0)  -1 \geq 1 & \text{if } \, x_0 \in \Sigma \setminus \gamma,\\
\partial_{\nu_\Gamma} \varphi(x_0)+\lambda \geq 0 & \text{if } \, x_0 \in \Gamma\setminus \gamma,\\
 \max\{\partial_{\nu_\Sigma}\varphi(x_0)  -1, \partial_{\nu_\Gamma}\varphi(x_0) \cdot \nu_\Gamma(x_0)+\lambda \}\geq 0 & \text{if }x_0\in \gamma,
\end{cases}
\eeq
where $\partial_{\nu_\Sigma}\varphi(x_0) = \nabla \varphi(x_0) \cdot \nu_{\Sigma}(x_0)$ and $\partial_{\nu_\Gamma}\varphi(x_0) = \nabla \varphi(x_0) \cdot \nu_{\Gamma}(x_0)$. 
\end{definition}

We can now prove the main result of the section.
\begin{proposition}\label{th:corner}
Let $\Om$, $\Co$ be as in \eqref{Coreg} and \eqref{Omreg}. Then the variational solution of \eqref{eq:neumann-0} is a viscosity supersolution in the sense of Definition~\ref{def:visco}.
\end{proposition}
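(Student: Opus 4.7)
The plan is to argue by contradiction, analyzing separately the possible locations of $x_0$. Suppose $u-\varphi$ has a local minimum at $x_0\in\overline\Omega$ and that the corresponding inequality in Definition~\ref{def:visco} fails strictly. After subtracting $(u-\varphi)(x_0)$ from $\varphi$ and replacing $\varphi$ by $\varphi-\eta|x-x_0|^2$ for $\eta>0$ small, I may assume $(u-\varphi)(x_0)=0$, the quadratic lower bound $u-\varphi\geq\eta|x-x_0|^2$ near $x_0$, and that the failing inequality still fails strictly (the perturbation leaves $\nabla\varphi(x_0)$ unchanged and only shifts $\Delta\varphi$ by $-2N\eta$). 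If $x_0\in\Omega$, interior elliptic regularity gives $u\in C^\infty(\Omega)$ with $\Delta u=c$ classically, so $D^2(u-\varphi)(x_0)\geq 0$ immediately yields $\Delta\varphi(x_0)\leq c$, a contradiction. If $x_0\in\Sigma\setminus\gamma$ (and symmetrically on $\Gamma\setminus\gamma$), standard boundary regularity for Neumann problems on the smooth portion of $\partial\Omega$, see~\cite{Nittka}, provides $u\in C^{1,\alpha}$ up to $\Sigma$ near $x_0$ with $\partial_{\nu_\Sigma}u(x_0)=1$ holding classically; a standard boundary touching argument then forces the tangential derivatives of $u-\varphi$ to vanish at $x_0$ and the inward-normal derivative to be nonnegative, giving $\partial_{\nu_\Sigma}\varphi(x_0)\geq\partial_{\nu_\Sigma}u(x_0)=1$, again contradicting the hypothesis.

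The substantive case is $x_0\in\gamma$, where $C^1$-regularity of $u$ may fail and one must argue directly from the weak formulation. Assume toward contradiction that \emph{both} $\partial_{\nu_\Sigma}\varphi(x_0)<1$ and $\partial_{\nu_\Gamma}\varphi(x_0)<-\lambda$; by continuity, both $\partial_{\nu_\Sigma}\varphi-1$ and $\partial_{\nu_\Gamma}\varphi+\lambda$ are bounded above by $-\delta<0$ on a neighborhood of $x_0$. For small $\kappa>0$, set $w_\kappa:=(\varphi+\kappa-u)^+\in H^1(\Omega)$; the quadratic lower bound forces $\operatorname{supp} w_\kappa\subset B_{\sqrt{\kappa/\eta}}(x_0)\cap\overline\Omega$. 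Using $w_\kappa$ as a test function in~\eqref{eq:general} and integrating by parts on the $C^2$ function $\varphi$, together with the fact that $\nabla(\varphi-u)=\nabla w_\kappa$ a.e.\ on the support of $w_\kappa$, yields the identity
\begin{align*}
\int_\Omega |\nabla w_\kappa|^2\,dx
&= \int_\Omega (c-\Delta\varphi)\,w_\kappa\,dx + \int_\Sigma (\partial_{\nu_\Sigma}\varphi-1)\,w_\kappa\,d\H^{N-1} \\
&\quad + \int_\Gamma (\partial_{\nu_\Gamma}\varphi+\lambda)\,w_\kappa\,d\H^{N-1}.
\end{align*}
The left-hand side is nonnegative. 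From $w_\kappa\leq\kappa$ and the support bound, the interior term on the right is bounded in absolute value by $C\kappa^{1+N/2}$; meanwhile both boundary integrands are $\leq-\delta$, and the continuity of $u$ up to $\gamma$ together with the Lipschitz geometry of $\partial\Omega$ at $x_0$ give a lower bound on $\int_{\partial\Omega}w_\kappa$ of strictly lower order in $\kappa$. Hence for $\kappa$ small enough the right-hand side is strictly negative, contradicting the sign of the left-hand side.

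The main obstacle is precisely this corner case: away from $\gamma$, the conclusion follows routinely from classical elliptic regularity combined with a touching argument, while at the corner one has to make the above scaling estimate work, exploiting the simultaneous failure of both Neumann conditions to manufacture a negative boundary contribution in the weak identity that dominates the interior term.
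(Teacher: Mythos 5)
Your treatment of the interior case and of $\Sigma\setminus\gamma$, $\Gamma\setminus\gamma$ is fine and matches the paper. The gap is in the corner case $x_0\in\gamma$, precisely at the step you yourself identify as the crux: the claim that $\int_{\partial\Omega}w_\kappa\,d\H^{N-1}$ is of \emph{strictly lower} order in $\kappa$ than the interior term. Run the scaling: the interior term is $O(\kappa\cdot\kappa^{N/2})=O(\kappa^{1+N/2})$, using $0\le w_\kappa\le\kappa$ and $\mathrm{supp}\,w_\kappa\subset B_{\sqrt{\kappa/\eta}}(x_0)$. For the boundary term you need a set of positive $\H^{N-1}$-measure on $\partial\Omega$ where $w_\kappa\gtrsim\kappa$, and the size of that set is governed by the modulus of continuity of $u-\varphi$ at $x_0$. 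The variational solution is only H\"older continuous up to the boundary (this is all that \cite{Nittka} gives, and no better regularity is available at the corner $\gamma$ --- that is the whole reason for the viscosity formulation). With $|u(x)-u(x_0)|\le C|x-x_0|^\alpha$ one gets $w_\kappa\ge\kappa/2$ only on $\partial\Omega\cap B_{c\kappa^{1/\alpha}}(x_0)$, hence $\int_{\partial\Omega}w_\kappa\gtrsim\kappa^{1+(N-1)/\alpha}$, and domination over $\kappa^{1+N/2}$ requires $\alpha>2(N-1)/N$; this means $\alpha>1$ for every $N\ge3$ and $\alpha\ge1$ (Lipschitz, and then only borderline) for $N=2$. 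So the contradiction cannot be manufactured this way, and the argument as written does not close.

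The paper circumvents exactly this difficulty in two steps. First it proves the weaker statement $\max\{-\Delta\varphi(x_0)+c,\ \partial_{\nu_\Sigma}\varphi(x_0)-1,\ \partial_{\nu_\Gamma}\varphi(x_0)+\lambda\}\ge0$: assuming all three are negative, the interior integrand in the identity you wrote down is itself nonpositive on a small ball, so no competition of orders in $\kappa$ is needed (the paper tests with $\min\{u-\varphi-\e,0\}$, which is your computation up to sign). Second, to drop the Laplacian term from the max, it exploits the exterior ball condition at $x_0\in\gamma\subset\partial\Co$ (available because $\Co$ is $C^2$) to build a barrier $\psi_q=q^{-3/2}(|x|^{-q}-1)\le0$ in $\Omega$ with $\psi_q(x_0)=0$, $\Delta\psi_q(x_0)\ge\sqrt q/2$ and $|\nabla\psi_q(x_0)|=q^{-1/2}$; adding $\psi_q$ to $\varphi$ preserves the local minimum at $x_0$, makes $-\Delta\varphi_q(x_0)+c<0$ for large $q$, and perturbs the gradient by $o(1)$ as $q\to\infty$. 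If you want to salvage your route, you need such a barrier (or some other mechanism making the interior term signed) rather than a comparison of orders in $\kappa$; note that your perturbation $\varphi\mapsto\varphi-\eta|x-x_0|^2$ moves $c-\Delta\varphi$ in the \emph{wrong} direction, and the naive fix $\varphi\mapsto\varphi+M|x-x_0|^2$ destroys the local minimum property, which is exactly why the sign condition $\psi_q\le0$ from the exterior ball is essential.
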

\begin{proof} 

 Since the equation is satisfied classically in $\Om$ and the Neumann boundary conditions are achieved in a classical sense at $\pa\Om\setminus \gamma$, it will be enough to check the property on $\gamma$. To this aim, assume that  $\varphi\in C^2(\R^N)$ and $x_0\in \gamma$ are such that $(u-\varphi)(x)\geq 0$, with equality achieved only at $x_0$. We start by showing that 
\beq\label{cond1}
 \max\{-\Delta \varphi(x_0)+c, \pa_{\nu_\Sigma}\varphi(x_0)-1,\pa_{\nu_\Gamma}\varphi(x_0)+\lambda\}\geq 0.
\eeq
We argue by contradiction, assuming that 
\[
\max\{-\Delta \varphi(x_0)+c, \pa_{\nu_\Sigma}\varphi(x_0)-1,\pa_{\nu_\Gamma}\varphi(x_0)+\lambda\}< 0.
\]
By continuity we may find a small ball $B_r(x_0)$ such that 
\beq\label{cont2}
-\Delta \varphi+c<0 \quad \text{in }B_r(x_0)\qquad\text{and}\qquad \pa_{\nu}\varphi-g < 0\quad\text{ on } (\pa\Omega\cap B_r(x_0))\setminus \gamma\,,
\eeq
with $g$ defined in \eqref{g}.
Then, setting $w:=u-\varphi$,  $h:=-c+\Delta \varphi$, $f=\pa_{\nu}(u-\varphi)=g-\pa_{\nu}\varphi$, we have that $w$ is a variational solution of
\[
\begin{cases}
-\Delta w= h & \text{in }\Om \cap B_r(x_0),\\
\pa_\nu w=f & \text{ in } \pa\Omega \cap B_r(x_0)
\end{cases}
\] 
that is, 
\beq\label{debole}
\int_{\Omega}\nabla w\cdot \nabla\psi\, dx=\int_{\Om}h\psi\, dx+\int_{\pa \Om} f\psi\, d\H^{N-1}
\eeq
for all $\psi\in H^1(\Om)$ with $\psi =0$ in $\Om\setminus B_{r}(x_0)$.  Let us now choose $\psi:=\min\{w-\e, 0\}$ and note that for $\e>0$ small enough 
\[
\psi=\min\{w-\e, 0\}=0\quad\text{in }\Om\setminus B_{r}(x_0)\,.
\]
 Then, \eqref{debole} combined with the fact that $h>0$ in $\Om\cap B_r(x_0)$ and $f>0$ in $\pa \Om\cap B_r(x_0)$, yield
\[
\int_{\Omega}\big|\nabla \big(\min\{w-\e, 0\}\big)\big|^2 dx= \int_{\Om\cap B_r(x_0)}h\psi\, dx+\int_{\pa \Om\cap B_r(x_0)} f\psi\, d\H^{N-1}\leq 0\,
\]
and in turn $\min\{w-\e, 0\}=0$ in $\Om$. This is impossible since $w-\e<0$ in a neighborhood of $x_0$. Thus \eqref{cond1} is established.

The inequality \eqref{cond1}  is not good enough, since we only want information on the boundary.  We thus claim that in fact
\beq\label{cond3}
 \max\{ \pa_{\nu_\Sigma}\varphi(x_0)-1,\pa_{\nu_\Gamma}\varphi(x_0)+\lambda\}\geq 0. 
\eeq
To this aim we observe that for any $x_0\in\gamma$ there exists a ball $B_r(\bar x)\subset\R^N\setminus\overline\Om$ with $x_0\in\pa B_r(\bar x)$ (it is enough to take a  ball contained in $\Co$ and tangent to $x_0$, which is possible by the $C^{2}$ assumption on $\Co$).
 By translating and dilating we may assume for simplicity that $\bar x=0$ and that $r=1$.   We perturb the test function $\varphi$ by a functions $\psi_q\in C^2(\R^n \setminus \{ 0 \} )$ that we define as 
\[
\psi_q(x) = \frac{1}{q^{3/2}}(|x|^{-q} - 1)
\]
where $q>0$ is a large number to be chosen. Then by the exterior ball condition we have that $\psi_q(x) \leq 0$ for $x \in \Omega$ while since $x_0 \in \pa B_1$ it holds  $\psi_q(x_0)= 0$.  Moreover by a direct computation we see that  
\[
\Delta \psi_q(x_0) \geq \frac{\sqrt{q}}{2} \quad \text{and} \quad |\nabla \psi_q(x_0)|= \frac{1}{\sqrt{q}},
\]
for $q$ sufficiently large.
We define a new test function 
\[
\varphi_q(x) = \varphi(x) + \psi_q(x).
\]
By construction it  holds $\varphi_q \leq\varphi$ in $\Omega $ and $\varphi_q(x_0) = \varphi(x_0)$, hence  $x_0$ is still a local minimum for $u-\varphi_q$. Thus
\[
-\Delta \varphi_q(x_0)+c\leq -\frac{\sqrt{q}}{2}-\Delta \varphi(x_0)<0
\]
when $q$ is large. Therefore, for $q$ large, from \eqref{cond1} we obtain \eqref{cond3} with $\varphi$ replaced by $\varphi_q$. Finally, letting $q\to\infty$, since $\nabla\varphi_q(x_0)\to\nabla\varphi(x_0)$ we obtain \eqref{cond3}. This concludes the proof.
\end{proof}

\subsection{Subdifferentials}\label{sec:subdif}
We need some notation in order  to proceed. Given  $X\subset\R^N$, a function $u : X \to \R$, a subset $Y\subset X$  and a  point $x\in Y$ we define  the subdifferential $J_Yu(x)$ as
\[
J_Yu(x):=\big\{\xi\in\R^N:\,u(y) - u(x) \geq \xi \cdot (y-x) \,\,\,  \text{for all} \, y \in Y\big\}.
\]
We  note that   $Y$ may even be a discrete set.  

\begin{remark}\label{rm:ovv}
Note that if  $Y$ is bounded and $u$ is bounded from below in $Y$, then
\[
\bigcup_{x\in Y}J_Y u(x)=\R^N.
\]
Indeed, for any $\xi\in \R^N$, we may find $c<0$ so negative that $\displaystyle\sup_{x\in Y}\big(-c+\xi\cdot x-u(x))<0$. Setting 
\[
t_0:=\sup\{ t>0:\,    -c+\xi\cdot x+s<u(x)\text{ for all $x\in Y$ and for $s\in(0,t)$}\},
\]
we clearly have  $-c+\xi\cdot \bar x+t_0=u(\bar x)$ for some $\bar x\in Y$  and  $-c+\xi\cdot  x+t_0\leq u(x)$ for all $x\in Y$; that is, $\xi\in J_Y u(\bar x)$.
\end{remark}
Let $\Om$, $\Gamma$ and $\Sigma$  be as in \eqref{Coreg} and \eqref{Omreg}. Recall that the crucial inequality \eqref{eq:crucial-est}  involves the set $\mathcal A_u $ which we define for any given  function $u:\overline\Om\to\R$ as
\beq\label{alambdau}
\mathcal A_u := \bigcup_{x\in\Omega} \{\xi \in \R^{N} : \xi \in J_{\overline\Om}\,u(x) \}.
\eeq
If the function $u$ in  \eqref{alambdau} is the  solution of \eqref{eq:neumann-0}, then it turns out that all relevant information is contained in its restriction to $\Gamma$. This leads us to define the following union of subdifferentials for functions defined on the boundary of $\Co$,     $v : K \subset \partial \Co \to \R$ 
\beq\label{blambdau}
\mathcal B_{v}^\lambda :=\bigcup_{x\in K} \{ \xi \in J_K v(x)  : \,\xi\cdot\nu_\Co(x)> \lambda \},
\eeq
where $\lambda \in (-1,1)$.  If $\Om$ and $u:\overline\Om\to\R$ are as above then we denote the restriction of $u$ to $\Gamma$  by $u_\Gamma$ and  by the previous notation we have 
\beq\label{blambdau2}
\mathcal B_{u_{\Gamma}}^\lambda =\bigcup_{x\in \Gamma} \{ \xi \in J_{\Gamma} u(x)  : \,\xi\cdot\nu_\Co(x)> \lambda \}.
\eeq
We transform the information of the Neumann boundary problem \eqref{eq:neumann-0}  into the following information on the set $\mathcal B_{u_{\Gamma}}^\lambda$. 
\begin{lemma}
\label{lem:u-gamma}
Let $\Om$,  $\Sigma$ and $\Co$ be as in \eqref{Coreg} and \eqref{Omreg}. Let  $u$ be the variational solution of \eqref{eq:neumann-0}. Then it holds 
\[
\mathcal B_{u_{\Gamma}}^\lambda \cap B_1 \subset\mathcal A_u \cap B_1,
\]
where $\mathcal B_{u_{\Gamma}}^\lambda$ is given by \eqref{blambdau2}.
\end{lemma}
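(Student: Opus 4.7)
The plan is to use the viscosity supersolution property from Proposition~\ref{th:corner} to exclude boundary contact points, exploiting both the strict inequality $\xi\cdot\nu_\Co(x_0)>\lambda$ built into membership in $\mathcal B_{u_\Gamma}^\lambda$ and the strict bound $|\xi|<1$ to neutralize the free interface $\Sigma$.

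First I would fix $\xi\in\mathcal B_{u_\Gamma}^\lambda\cap B_1$ together with a witnessing point $x_0\in\Gamma$ satisfying $\xi\in J_\Gamma u(x_0)$ and $\xi\cdot\nu_\Co(x_0)>\lambda$. Since the variational solution $u$ is continuous up to $\pa\Omega$, the function $w(y):=u(y)-\xi\cdot y$ attains a global minimum on the compact set $\bar\Omega$ at some $\bar x$, which automatically yields $\xi\in J_{\bar\Omega}u(\bar x)$. The whole content of the lemma then reduces to showing that $\bar x\in\Omega$, which I would do by contradiction, splitting on the location of $\bar x$.

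If $\bar x\in\Sigma\setminus\gamma$, testing Proposition~\ref{th:corner} with the affine $\varphi(y):=\xi\cdot y+w(\bar x)$ forces $\xi\cdot\nu_\Sigma(\bar x)\geq 1$, contradicting $|\xi|<1$. The genuinely delicate case is $\bar x\in\Gamma$: the viscosity test at $\bar x$ only delivers $\xi\cdot\nu_\Co(\bar x)\leq\lambda$, which does not directly contradict the hypothesis, since the strict inequality is imposed at $x_0$, not at $\bar x$. To bypass this I would observe that $\bar x\in\Gamma\subset\bar\Omega$ forces $\min_{\bar\Omega}w=\min_\Gamma w$, while the defining subgradient property $\xi\in J_\Gamma u(x_0)$ says precisely that $x_0$ attains $\min_\Gamma w$. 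Hence $x_0$ is itself a global minimizer of $w$ on $\bar\Omega$, and I can re-apply the viscosity test at $x_0$ with $\tilde\varphi(y):=\xi\cdot y+w(x_0)$: when $x_0\in\Gamma\setminus\gamma$ this immediately yields $\xi\cdot\nu_\Co(x_0)\leq\lambda$, and when $x_0\in\gamma$ the $\Sigma$-branch of the max in Definition~\ref{def:visco} is strictly negative (again by $|\xi|<1$), so the $\Gamma$-branch must be non-negative, producing the same estimate. Both outcomes contradict $\xi\cdot\nu_\Co(x_0)>\lambda$.

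The main obstacle is that a naive lifting of an affine function from below produces a contact point $\bar x$ whose location one does not control, whereas the strict inequality that must be contradicted is attached to a specific point $x_0$. The trick that makes everything work is the transfer from $\bar x$ back to $x_0$: the subgradient condition pinpoints $x_0$ as a $\Gamma$-minimizer of $w$, so any $\bar\Omega$-minimum realized on $\Gamma$ is automatically also realized at $x_0$, where the viscosity test can be applied exactly where the useful strict inequality lives. The corner case $x_0\in\gamma$ is then absorbed uniformly by $|\xi|<1$, which switches off the free interface branch of the Neumann condition.
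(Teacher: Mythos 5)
Your proof is correct and follows essentially the same route as the paper's: both arguments slide the affine function $y\mapsto\xi\cdot y$ under $u$, rule out contact on $\Sigma$ via the viscosity supersolution property and $|\xi|<1$, and rule out contact on $\Gamma$ by applying the viscosity test at the witnessing point $x_0$ where $\xi\cdot\nu_\Co(x_0)>\lambda$ (the paper phrases this as first showing $\xi\notin J_{\overline\Om}u(x_0)$, which is exactly the contrapositive of your transfer step from $\bar x$ back to $x_0$). No gaps.
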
 

\begin{proof}
Recall that $u$ is continuous up to the boundary and it is a viscosity supersolution of \eqref{eq:neumann-0} by Proposition~\ref{th:corner}.
Fix $\xi \in \mathcal B^\lambda_{u_\Gamma} \cap B_1$. Recall that this means that $\xi\in J_\Gamma u(x)$, for some $x\in\Gamma$ and that $\xi\cdot\nu_\Co(x)>\lambda$ (and of course $|\xi|<1)$. We need to show that  $\xi \in \mathcal A_u$, i.e., $\xi \in J_{\overline\Om}u(\bar x)$ for some $\bar x \in \Om$.

 First we claim  that $\xi\not\in J_{\overline\Om}u(x)$, where $x \in \Gamma$ is the point for which $\xi\in J_\Gamma u(x)$. Indeed, assume the opposite, that is
\[
u(y)\geq u(x) + \xi \cdot (y- x)=: \varphi(y) \qquad \text{for all } \, y \in \overline \Omega.
\]
In particular,  $x$ is the minimum point of $u - \varphi$ and since $u$ is a viscosity supersolution and $\nabla \varphi(x) = \xi$, see Definition~\ref{def:visco}, we have
\[
\begin{cases}
 \xi \cdot \nu_\Gamma( x)+\lambda \geq 0 & \text{if } \,  x\in \Gamma\setminus \gamma,\\
 \max\{ \xi \cdot \nu_\Sigma( x) -1,  \xi \cdot \nu_\Gamma( x)+\lambda \}\geq 0 & \text{if } x\in \gamma.
\end{cases}
\]
From the above condition, since $|\xi|<1$ we have that $ \xi \cdot \nu_\Gamma( x)+\lambda\geq0$, which is impossible as 
$\xi\cdot\nu_\Gamma( x)=-\xi\cdot\nu_\Co( x)<-\lambda$. Therefore $\xi\not\in J_{\overline\Om}u(x)$. 

By the above it holds that  the inequality $u(y)\geq \varphi(y)$ is not true for all $y\in\overline\Omega$. This means that $\bar c=\min_{y\in\overline\Omega}\,(u(y)-\varphi(y))<0$. In turn we have that the hyperplane $s=\varphi(y)+\bar c$ touches from below the graph of $u$ at some point $\bar x\in\overline\Om$. Clearly $\bar x\not\in\Gamma$, since $\varphi(y)+\bar c<u(y)$ for all $y\in\Gamma$. On the other hand, if $\bar x\in \Sigma$, again by the supersolution property, we would have that $\xi\cdot\nu_\Sigma(\bar x)\geq1$, which is impossible since $|\xi|<1$. Therefore $\bar x\in\Omega$, which implies $\xi\in\mathcal A_u$.
\end{proof}

We thus deduce from Lemma \ref{lem:u-gamma} that in order to show the inequality \eqref{eq:crucial-est}, it is enough to  study the  restriction of $u$ on $\Gamma$. It turns  out that,  in terms of the  inequality \eqref{eq:crucial-est}, it is not relevant that   $u_\Gamma$ is  a restriction of the solution of the Neumann problem \eqref{eq:neumann-0}. Indeed, from now on we study  generic functions $v: K \to \R$ which are defined on $K \subset \partial \Co$. The following lemma provides an important property on the structure of the subdifferentials of such functions, which is due to  the convexity of $\Co$. 
\begin{lemma}
\label{lem:v-gamma}
Let $\Co\subset\R^N$ be a closed convex set of class $C^1$, $K \subset \partial \Co$ and $v : K \to \R$.  
If $\xi\in J_K v(x)$ for some $x\in K$, then 
\[
\xi+t\nu_\Co(x)\in J_K v(x) \qquad \text{ for all } \, t>0.
\]
\end{lemma}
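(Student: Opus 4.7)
The plan is to unwind the definition of $J_K v(x)$ and reduce the claim to a purely geometric statement about the convex set $\Co$, namely that the supporting hyperplane at a $C^1$ point lies on one side of $\Co$.

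Fix $x\in K$ and $\xi\in J_K v(x)$, so that by definition
\[
v(y)-v(x)\geq \xi\cdot(y-x)\qquad \text{for all } y\in K.
\]
Adding $t\,\nu_\Co(x)\cdot(y-x)$ to the right-hand side, we see that to prove $\xi+t\nu_\Co(x)\in J_K v(x)$ for every $t>0$ it suffices to show the single inequality
\[
\nu_\Co(x)\cdot(y-x)\leq 0\qquad \text{for all } y\in K.
\]

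The remaining step is exactly the supporting hyperplane property of convex sets at a $C^1$ boundary point. Since $\Co$ is convex and of class $C^1$, the outer unit normal $\nu_\Co(x)$ is well-defined, and the affine hyperplane $H:=\{z\in\R^N:\nu_\Co(x)\cdot(z-x)=0\}$ supports $\Co$ at $x$, meaning $\Co\subset \{z:\nu_\Co(x)\cdot(z-x)\leq 0\}$. Because $K\subset \pa\Co\subset\Co$, every $y\in K$ satisfies the desired inequality, and the lemma follows.

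I do not expect any serious obstacle: the statement is essentially a restatement of the fact that a supporting hyperplane at a smooth point of a convex set leaves the set in one closed half-space, and the translation by $t\nu_\Co(x)$ only tilts the affine function $\xi\cdot(\cdot-x)+v(x)$ further downward on $K$.
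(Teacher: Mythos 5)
Your proof is correct and follows exactly the paper's argument: reduce to the inequality $\nu_\Co(x)\cdot(y-x)\leq 0$ for $y\in K$, which is the supporting half-space property of the convex set $\Co$ at the $C^1$ boundary point $x$. Nothing is missing.
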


\begin{proof}
 If $\xi\in J_K v(x)$, then
\[
v(y) - v(x) \geq \xi \cdot (y-x) \qquad \text{for all } \, y \in K.
\]
By convexity it holds $\nu_\Co(x) \cdot (y - x) \leq 0$ for all $y \in K$. Therefore for any $t>0 $ it holds 
\[
v(y) - v(x) \geq (\xi + t  \nu_\Co(x) ) \cdot (y-x) \qquad \text{for all } \, y \in K,
\]
 that is $\xi + t  \nu_\Co(x) \in J_K v(x)$.   
\end{proof}

\subsection{The $\lambda$-ABP property and the  isoperimetric inequality}\label{sec:lambdaABP}

By Lemma \ref{lem:u-gamma} it is clear  that if we would have $|\mathcal B_{u_{\Gamma}}^\lambda \cap B_1|\geq |B^\lambda|$, where $u_\Gamma$ is the restriction of $u $ on $\Gamma$,  then we would have the inequality \eqref{eq:crucial-est}. As we discussed in the previous section, this property is related to generic functions $v: K \to \R$ ,  where $K \subset \partial \Co$. This in turn means that such a property is only related to the convex set $\Co$ itself. For simplicity we  restrict to functions defined on finite sets.  

\begin{definition}\label{def:ABP}
Let $\Co\subset\R^N$ be a closed convex set of class $C^1$ and $\lambda\in(-1,1)$. We say that $\Co$ has the \emph{$\lambda$-ABP property} if for any finite subset $K\subset\pa\Co$ and for every  $v:K\to\R$
\[
|\mathcal B^\lambda_{v} \cap B_1|\geq|B^\lambda|,
\]
where we recall $B^\lambda=\{x\in B_1:\,x\cdot e_N>\lambda\}$ and $\mathcal B^\lambda_{v}$ is defined in \eqref{blambdau}.
\end{definition}

Let us make a few remarks. First, it is not a priori clear why a convex set would satisfy Definition \ref{def:ABP}, but we show at the end of the section that  every convex set satisfy  $\lambda$-ABP property for $\lambda = 0$. This follows rather easily from Lemma \ref{lem:v-gamma}. In Section 3 we show that convex cylinders have the   $\lambda$-ABP  for every $\lambda\in(-1,1)$. This is much more involved, but the statement is again based on  Lemma  \ref{lem:v-gamma}, which is the only property of the subdifferentials that we need.

We also remark that it follows immediately from the definition that for every $x \in K$ the subdifferential $J_Kv(x)$ is a closed convex set in $\R^N$. Moreover if $K\subset\pa\Co$ is a finite set then there are only finitely many subdifferentials $J_Kv(x)$ and it is also immediate that they are essentially disjoint, i.e., they have disjoint interiors. Therefore by Remark  \ref{rm:ovv}, the subdifferentials $J_Kv(x)$ for $x \in K$ form a convex partition of the space $\R^N$, i.e., partition by convex sets. Hence, the property in Definition \ref{def:ABP} is really a property related to convex partition of $\R^N$ by sets $J_Kv(x)$  which satisfy the statement of Lemma  \ref{lem:v-gamma}. 

We proceed with the following remark.
\begin{remark}\label{rm:ABPscaled}
A simple scaling argument shows that  $\Co$ has the $\lambda$-ABP property if and only if $\eta\Co$ has $\lambda$-ABP property, for any $\eta>0$. Moreover, if $\Co$ has the $\lambda$-ABP property 
then it also holds 
\[
|\mathcal B^\lambda_{v} \cap B_r|\geq|B_r^\lambda|,
\]
for all $r \in (0,1)$.  
\end{remark}
We start by showing  that the above ABP property stated on discrete sets is inherited by all compact subsets of $\pa \Co$, provided that $\pa \Co$ is smooth ($C^1$-regular is in fact enough).
\begin{lemma}\label{lem:ABP}
Let $\Co\subset\R^N$ be a closed convex set  with $C^1$ boundary.  Assume also that $\Co$ satisfies the $\lambda$-ABP property, according to Definition~\ref{def:ABP}, for some $\lambda\in (-1, 1)$. Then, for all compact subsets $K \subset \pa \Co$ and   for  all bounded functions $v : K \to \R$ it holds 
\beq\label{eq:bella0}
|\mathcal B^{\lambda}_{v} \cap B_1|\geq|B^\lambda|,
\eeq
where $\mathcal B^\lambda_{v}$ is  defined in \eqref{blambdau}.
\end{lemma}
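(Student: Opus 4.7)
The plan is to approximate $K$ by finite subsets $K_n\subset K$ and apply the discrete $\lambda$-ABP property to $v_n:=v|_{K_n}$, then pass to the limit $n\to\infty$. The main obstacle is that the strict condition $\xi\cdot\nu_\Co>\lambda$ defining $\mathcal B^\lambda_v$ degenerates to $\geq\lambda$ under limits, and the resulting set can exceed $\mathcal B^\lambda_v$ in measure. I would overcome this by inserting a positive cushion $t>0$ via Lemma~\ref{lem:v-gamma} \emph{before} passing to the limit.

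Concretely, I would first fix a Hausdorff-dense sequence of finite subsets $K_n\subset K$ (e.g.\ $\frac{1}{n}$-nets) and set $v_n:=v|_{K_n}$, so that $|\mathcal B^\lambda_{v_n}\cap B_1|\geq|B^\lambda|$ for every $n$ by the discrete $\lambda$-ABP property. Fix $t\in(0,1)$ small and, for each $x\in K_n$, consider the translation $T_x(\xi):=\xi+t\nu_\Co(x)$. Lemma~\ref{lem:v-gamma} gives $T_x(J_{K_n}v(x))\subset J_{K_n}v(x)$, while $T_x$ shifts $\xi\cdot\nu_\Co(x)$ by $+t$ and increases $|\xi|$ by at most $t$. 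Therefore $T_x$ maps
\[
P_x^n:=\bigl\{\xi\in J_{K_n}v(x):\,\xi\cdot\nu_\Co(x)>\lambda\bigr\}\cap B_{1-t}
\]
into $\mathcal B^{\lambda+t}_{v_n}\cap B_1$. Since the subdifferentials $\{J_{K_n}v(x)\}_{x\in K_n}$ are essentially disjoint and each image $T_x(P_x^n)$ is contained in $J_{K_n}v(x)$, the images are essentially disjoint as well; being piecewise a translation, the resulting map $T$ is measure-preserving on $\bigcup_x P_x^n$, and we obtain
\[
|\mathcal B^{\lambda+t}_{v_n}\cap B_1|\geq|\mathcal B^\lambda_{v_n}\cap B_{1-t}|\geq|B^\lambda|-|B_1\setminus B_{1-t}|.
\]

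With this cushion in place, the limit $n\to\infty$ can be taken safely. Setting $A^t_n:=\mathcal B^{\lambda+t}_{v_n}\cap B_1\subset B_1$, reverse Fatou yields $|\limsup_n A^t_n|\geq|B^\lambda|-|B_1\setminus B_{1-t}|$. For $\xi\in\limsup_n A^t_n$ a diagonal extraction produces $x_k\in K_{n_k}$ with $\xi\in J_{K_{n_k}}v(x_k)$, $\xi\cdot\nu_\Co(x_k)>\lambda+t$, and $x_k\to x^*\in K$; the $C^1$-regularity of $\partial\Co$ gives $\nu_\Co(x_k)\to\nu_\Co(x^*)$, hence $\xi\cdot\nu_\Co(x^*)\geq\lambda+t>\lambda$ \emph{strictly} (this is precisely where the cushion is decisive). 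Combining with the density of $K_{n_k}$ in $K$ and the continuity of $v$, the subgradient inequality $v(y_k)-v(x_k)\geq\xi\cdot(y_k-x_k)$, with $y_k\in K_{n_k}$ approximating an arbitrary $y\in K$, passes to the limit and yields $\xi\in J_Kv(x^*)$, hence $\xi\in\mathcal B^\lambda_v\cap B_1$. Therefore $|\mathcal B^\lambda_v\cap B_1|\geq|B^\lambda|-|B_1\setminus B_{1-t}|$, and letting $t\to 0^+$ concludes the proof. For a merely bounded (possibly discontinuous) $v$, only the limit passage of the subgradient inequality needs extra care, which can be handled via the upper/lower semicontinuous envelopes of $v$; in the intended application to Lemma~\ref{lem:u-gamma}, $v=u_\Gamma$ is the restriction of the H\"older continuous solution of \eqref{eq:neumann-0}, so this technicality does not arise.
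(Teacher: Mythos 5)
Your proof is correct, and while the overall skeleton (discretize $K$, insert a cushion $\lambda\mapsto\lambda+t$ before passing to the limit, then let $t\to0^+$) coincides with the paper's, the way you control the loss caused by the cushion is genuinely different and, in my view, cleaner. The paper proves the analogue of your estimate $|\mathcal B^{\lambda+t}_{v_n}\cap B_1|\geq|\mathcal B^{\lambda}_{v_n}\cap B_1|-Ct$ analytically: it differentiates $t\mapsto|\mathcal B^{\lambda+t}_{v_n}\cap B_r|$ via a coarea-type formula, uses Lemma~\ref{lem:v-gamma} to show that $t\mapsto\H^{N-1}(A_t\cap B_{1+t})$ is non-decreasing (where $A_t$ is the level set $\{\xi\cdot\nu_\Co(x)=\lambda+t\}$ inside the subdifferentials), and then extracts a universal bound $\H^{N-1}(A_t\cap B_1)\leq C$ by averaging over $t\in[1,2]$. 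You instead use Lemma~\ref{lem:v-gamma} to build the piecewise translation $\xi\mapsto\xi+t\nu_\Co(x)$ on each cell, observe that the images stay in the respective (essentially disjoint) subdifferentials, and conclude by measure preservation that $|\mathcal B^{\lambda+t}_{v_n}\cap B_1|\geq|\mathcal B^{\lambda}_{v_n}\cap B_{1-t}|$; this replaces the derivative/monotonicity/mean-value machinery with a one-step geometric argument and yields an explicit error $|B_1\setminus B_{1-t}|$. Your limit passage also differs in detail (reverse Fatou on $\limsup_n A^t_n$ plus a compactness/diagonal argument identifying each limit point as an element of $\mathcal B^\lambda_v$, versus the paper's set inclusion $\mathcal B^{\lambda'}_{v_n}\cap B_1\subset\mathcal B^\lambda_v\cap B_1$ for large $n$), but both hinge on the same mechanism: the strict margin $t>0$ survives the convergence $\nu_\Co(x_k)\to\nu_\Co(x^*)$ guaranteed by the $C^1$ regularity. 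The one place where you are vaguer than necessary is the case of merely bounded, discontinuous $v$: rather than invoking semicontinuous envelopes, the clean fix (used in the paper) is to replace $v$ at the outset by the restriction to $K$ of its convex envelope, which leaves the subdifferentials unchanged and is automatically continuous, after which your limit passage applies verbatim.
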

\begin{proof}
 Fix $K$, which is  a compact subset of $\pa\Co$ and $v : K \to \R$ which is bounded.  Note first that the convex envelope of $v$ shares the same subdifferentials as $v$ itself. Therefore we may assume that  $v$ is a restriction of a convex function, in particular, $v$ is continuous and at every point  in $K$ its subdifferential is non-empty. Let us choose points $x_1, x_2, x_3, \dots$ in $K$ which are dense in $K$. We define    $K_n= \{ x_1, \dots, x_n\}$ and  the function $v_n : K_n \to \R$ as the restriction of $v$ on $K_n$, i.e., $v_n(x_i)= v(x_i)$ for $i =1,2, \dots, n$.  Note that then 
\beq \label{eq:convergence}
\sup_{x \in K}\inf_{y \in K_n}|v(x) - v_n(y)| \to 0  \quad \text{ as  } \,  n \to \infty. 
\eeq

Let us fix $\lambda' >\lambda$ close to $\lambda$ and show first that  when $n$ is large it holds
\beq\label{eq:bella2}
\mathcal B^{\lambda'}_{v_n}\cap B_1 \subset  \mathcal B^{\lambda}_{v} \cap B_1.
\eeq
To this aim we fix $\xi  \in \mathcal B^{\lambda'}_{v_n}\cap B_1$. This means that  $\xi \in J_{K_n}v_n(x_n)$ for some $x_n \in K_n$ and $\xi \cdot \nu_{\Co}(x_n)> \lambda' $.   By Remark \ref{rm:ovv} there is $x \in K$ such that $\xi \in  J_{K}v(x)$. By the convergence \eqref{eq:convergence} for any $\e>0$ it holds $|x-x_n| < \e$ when $n$ is large. Therefore we have 
\[
\xi \cdot \nu_{\Co}(x)  \geq \xi \cdot \nu_{\Co}(x_n) - |\nu_{\Co}(x)  - \nu_{\Co}(x_n) | > \lambda'   - O(\e) > \lambda 
\] 
when $\e$ is small and $n$ is large. This yields \eqref{eq:bella2}. 

 By Definition~\ref{def:ABP} we have the estimate 
\beq\label{eq:bella}
|\mathcal B^\lambda_{v_n} \cap B_1|\geq|B^{\lambda}|. 
\eeq  
Let us then prove that there exists a universal constant $C>1$ such that 
\beq\label{eq:brutta}
|\mathcal B^{\lambda'}_{v_n} \cap B_1| \geq |\mathcal B^\lambda_{v_n} \cap B_1| - C|\lambda' - \lambda|.
\eeq
We define for $t \geq 0$ 
\[
A_t  :=\bigcup_{x\in K_n} \{ \xi \in J_{K_n} v_n(x)  : \,\xi\cdot\nu_\Co(x)= \lambda+t \}
\]
and recall that by \eqref{blambdau} $\mathcal B_{v_n}^{\lambda+t} =\bigcup_{x\in K_n} \{ \xi \in J_{K_n} v_n(x)  : \,\xi\cdot\nu_\Co(x)> \lambda +t\}$. Clearly,  the function $t \mapsto |\mathcal B^{\lambda+t}_{v_n} \cap B_r|$   is decreasing and Lipschitz continuous,   and at points of differentiability it holds 
\beq\label{eq:brutta2}
\frac{d}{d t} |\mathcal B^{\lambda+t}_{v_n} \cap B_r| = - \H^{N-1}(A_t \cap B_r) 
\eeq
for all $r>0$. 

We fix $x \in K_n$ and study the set 
\[
A_{t}(x) =  \{ \xi \in J_{K_n} v_n(x)  : \,\xi\cdot\nu_\Co(x)=  \lambda +t  \}. 
\]
Assume that $\xi \in A_{t}(x)$. In particular then $ \xi \in J_{K_n} v_n(x)  $.   We use Lemma \ref{lem:v-gamma} to deduce that  for every $s>0$  it holds 
$\xi + s \nu_\Co(x) \in J_{K_n} v_n(x)$. Therefore for every $s>0$ it holds 
\[
A_{t}(x) + s \{\nu_\Co(x)  \} \subset  A_{t+s}(x) .
\]
Since the map  $\xi \mapsto \xi  + s v_n(x)$ is an isometry, it holds $\H^{N-1}( A_{t}(x) \cap B_{1+t}) \leq \H^{N-1}( A_{t+s}(x) \cap B_{1+t+s}) $. Therefore   we deduce that the function 
\[
t \mapsto \H^{N-1}(A_t \cap B_{1+t})  = \sum_{x \in K_n}  \H^{N-1}( A_{t}(x) \cap B_{1+t})
\]
is non-decreasing. Integrating \eqref{eq:brutta2} we have 
\[
\int_{1}^2 \H^{N-1}(A_t \cap B_{1+t})\, dt   \leq -   \int_{1}^2 \frac{d}{dt} |\mathcal B^{\lambda+t}_{v_n} \cap B_3| \, dt  \leq  |B_3|.
\]
By the mean value theorem there is $\hat t \in [1,2]$ such that  $\H^{N-1}(A_{\hat{t}} \cap B_{1+\hat{t}})  \leq C. $
In turn, using the monotonicity obtained above, we have 
\[
 \H^{N-1}(A_t \cap B_{1})  \leq C \qquad \text{for all }\, t \in [0,1].
\]
The claim \eqref{eq:brutta} then follows by integrating \eqref{eq:brutta2} from $t=0$ to $t = \lambda'-\lambda$. 

Finally the statement of the lemma follows from \eqref{eq:bella2}, \eqref{eq:bella} and \eqref{eq:brutta} and letting $\lambda' \to \lambda$.   
\end{proof}

\begin{proposition}\label{th:ABP} Let $\Co$ be a closed convex set which satisfies \eqref{Coreg}  and assume  it satisfies  the $\lambda$-ABP property for some $\lambda\in (-1,1)$. Then, for every open set $\Om$  satisfying \eqref{Omreg}  and $|\Om|=v$ we have
\[
\J(\Om)\geq J_{\lambda,\textbf{H}}(B^\lambda[v]).   
\] 
Moreover, if  $\J(\Om)= J_{\lambda,\textbf{H}}(B^\lambda[v])$, then $\Om$ coincides with a solid spherical cap isometric to $ B^\lambda[v]$, supported on a facet of $\Co$. 
\end{proposition}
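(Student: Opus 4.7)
The plan is to implement the ABP scheme outlined in Section~\ref{sec:overview}, assembling the machinery built in the section. First, by simultaneously dilating $\Om$ and $\Co$ and using the scaling invariance of the $\lambda$-ABP property (Remark~\ref{rm:ABPscaled}) together with the homogeneity of $\J$ and of $J_{\lambda,\textbf{H}}(B^\lambda[\cdot])$, I reduce to the case $|\Om|=|B^\lambda|$. I then let $u$ be the variational solution of \eqref{eq:neumann-0} with constant $c=\J(\Om)/|\Om|$ prescribed by the compatibility condition; standard elliptic regularity gives $u$ continuous on $\overline\Om$ and smooth in $\Om$.

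The crucial step is the lower bound $|\mathcal A_u|\geq|B^\lambda|$. I would obtain it by chaining three already-established results: Proposition~\ref{th:corner}, which ensures $u$ is a viscosity supersolution of \eqref{eq:neumann-0}; Lemma~\ref{lem:u-gamma}, which gives $\mathcal B^\lambda_{u_\Gamma}\cap B_1\subset\mathcal A_u\cap B_1$; and Lemma~\ref{lem:ABP}, applied to $u_\Gamma$ on the compact set $\Gamma\subset\pa\Co$, which converts the $\lambda$-ABP hypothesis into $|\mathcal B^\lambda_{u_\Gamma}\cap B_1|\geq|B^\lambda|$. Combining these yields $|\mathcal A_u|\geq|\mathcal A_u\cap B_1|\geq|B^\lambda|$. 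The inequality then follows from the classical ABP chain: using the Area Formula on the contact set $\Om^+$ (where $\nabla^2u\geq 0$) together with the arithmetic-geometric mean inequality,
\[
|B^\lambda|\leq|\mathcal A_u|=|\nabla u(\Om^+)|\leq\int_{\Om^+}\det\nabla^2 u\,dx\leq\int_{\Om^+}\Bigl(\frac{\Delta u}{N}\Bigr)^N dx\leq\Bigl(\frac{c}{N}\Bigr)^N|\Om|.
\]
With $|\Om|=|B^\lambda|$ this forces $c\geq N$, so $\J(\Om)\geq N|B^\lambda|=J_{\lambda,\textbf{H}}(B^\lambda)$.

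For the rigidity statement I would trace equality back through the whole chain. Saturation of AM-GM forces $c=N$ and $\nabla^2 u=I$ on $\Om^+=\Om$, so $u(x)=\tfrac12|x-x_0|^2+\mathrm{const}$ for some $x_0\in\R^N$; the identity $|\mathcal A_u|=|\mathcal A_u\cap B_1|$ then gives $\mathcal A_u=\Om-x_0\subset\overline{B_1}$, i.e., $\Om\subset\overline{B_1(x_0)}$. The Neumann condition on $\Sigma$ becomes $(x-x_0)\cdot\nu_\Sigma=1$, which combined with $|x-x_0|\leq 1$ forces $\Sigma\subset\pa B_1(x_0)$; on $\Gamma$ the condition reads $(x-x_0)\cdot\nu_\Co(x)=\lambda$. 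Differentiating the latter identity tangentially along $\pa\Co$ produces $W(\tau)=0$ on $\Gamma$, where $W$ is the Weingarten map of $\pa\Co$ and $\tau:=(x-x_0)-\lambda\nu_\Co(x)$ is the tangential component of $x-x_0$; since $|\tau|^2=1-\lambda^2>0$ on the contact curve $\gamma$, this degeneracy combined with the convexity of $\Co$ forces $W\equiv 0$ on $\Gamma$, so $\Gamma$ lies on a facet of $\pa\Co$. Together with the spherical shape of $\Sigma$ this identifies $\Om$ with an isometric copy of $B^\lambda[v]$ sitting on that facet.

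The main obstacle is the rigidity step, and specifically upgrading the pointwise identity $(x-x_0)\cdot\nu_\Co=\lambda$ on an $(N-1)$-dimensional piece of $\pa\Co$ to full flatness by propagating the kernel condition $W(\tau)=0$; all other steps are direct applications of the machinery already developed in Sections~\ref{sec:neu}--\ref{sec:lambdaABP}.
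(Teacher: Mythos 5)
The inequality half of your argument is correct and coincides with the paper's proof: scaling, the variational solution of \eqref{eq:neumann-0}, the chain Proposition~\ref{th:corner} $\to$ Lemma~\ref{lem:u-gamma} $\to$ Lemma~\ref{lem:ABP}, and then the ABP chain \eqref{eq:cabre}. (One small omission even here: before writing $u(x)=\tfrac12|x-x_0|^2+\mathrm{const}$ you must know $\Om$ is connected; the paper proves this first, via the strict concavity of $v\mapsto J_{\lambda,\textbf{H}}(B^\lambda[v])$ applied to a hypothetical connected component.)

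The genuine gap is in your rigidity step. From $(x-x_0)\cdot\nu_\Co(x)=\lambda$ on $\Gamma$ you correctly derive $W(\tau)=0$, but this only says that the tangential part $\tau$ of $x-x_0$ lies in the kernel of the Weingarten map at each point; it does not force $W\equiv0$, and in fact flatness of $\Gamma$ \emph{cannot} be deduced from the Neumann identity alone. For instance, if $\lambda>0$ and a portion of $\pa\Co$ is a sphere of radius $\lambda$ centered at $x_0$, then $(x-x_0)\cdot\nu_\Co(x)=\lambda$ holds identically there with $W=\lambda^{-1}\mathrm{Id}\neq0$; similarly a circular arc of radius $\lambda$ about $z_0$ in the planar section of a cylinder satisfies the identity without being flat. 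So the "main obstacle" you flag is not merely technical: that route is blocked. The paper avoids curvature entirely and argues globally in the opposite direction of containment. Having established $\Om\subset B_1(x_0)$ and $\pa\Om\setminus\Co\subset\pa B_1(x_0)$, it fixes one point $\hat x\in\Gamma\setminus\gamma$, forms the half-space $H=\{y:(y-x_0)\cdot\nu_\Co(\hat x)>\lambda\}$ (whose boundary is the tangent plane to $\Co$ at $\hat x$, with $\Co\subset\R^N\setminus H$ by convexity), and shows by a ray-tracing argument that every half-line from $\hat x$ in a strictly outward direction stays in $\Om$ until it hits $\pa B_1(x_0)$ --- it cannot re-enter $\Co$ by convexity, and it cannot cross $\pa\Om\setminus\Co$ inside the open ball. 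Hence $B_1(x_0)\cap H\subset\Om$, and equality of volumes gives $\Om=B_1(x_0)\cap H$; the flat disc $\pa H\cap B_1(x_0)$ is then the wetted region, which is what exhibits the facet of $\Co$. You should replace your Weingarten computation with this containment argument (or an equivalent global one).
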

 \begin{proof}
 By scaling and Remark~\ref{rm:ABPscaled}, we may assume $v=|B^\lambda|$. The inequality follows from the argument from the introduction which we now repeat rigorously. Let $u :\bar \Omega \to \R$ be the variational  solution of the Neumann boundary problem  \eqref{eq:neumann-0} and denote its restriction to $\Gamma \subset \pa \Co$ by $u_\Gamma$.  Let $\mathcal A_u$ be the set  defined in \eqref{alambdau} and let $\mathcal B_{u_{\Gamma}}^\lambda$ be the set  defined in \eqref{blambdau2}. Denote also by $\Omega^+$ the set of points $x \in \Omega $ where $J_{\bar \Omega}u(x) \cap B_1$ is non-empty.  Then by Lemma~\ref{lem:u-gamma} it holds
\[
|\mathcal A_u \cap B_1|\geq |\mathcal B_{u_{\Gamma}}^\lambda \cap B_1|. 
\]
On the other hand, Definition \ref{def:ABP} of  the $\lambda$-ABP-property  and Lemma \ref{lem:ABP} yield
\[
|\mathcal B_{u_{\Gamma}}^\lambda \cap B_1| \geq |B^\lambda|.  
\]
Therefore we have 
\begin{equation}\label{eq:cabre}
\begin{split}
|\Omega|=|B^\lambda|\leq |\mathcal A_u \cap B_1| =  |\nabla u(\Om^+)| \leq \int_{\Omega^+}\det\nabla^2u \, dx \leq \int_{\Omega^+}\frac{(\Delta u)^N}{N^N}\, dx \leq \left( \frac{\J(\Omega )}{|\Omega| N}\right)^N |\Omega^+|. 
\end{split}
\end{equation}
The inequality then follows from  
\[
\J(B^\lambda) = N |B^\lambda| = N|\Omega| \geq N|\Omega^+|. 
\]

We now analyze the case of equality. Assume again without loss of generality that $v=|B^\lambda|$ and that $\Om$ is an open set satisfying \eqref{Omreg} for which $\J(\Om)= J_{\lambda,\textbf{H}}(B^\lambda)$. 

We begin by showing that  $\Om$ is connected. This follows from the fact that the isoperimetric function $v \mapsto J_{\lambda,\textbf{H}}(B^\lambda[v])$ is strictly convex.  Indeed, we argue by contradiction and assume there is a component  $\Om_1\subset\Om$, with $|\Om_1|=:v_1\in (0, |B^\lambda|)$ such that 
\[
\J(\Om)=\J(\Om_1)+\J(\Om\setminus \Om_1). 
\]
Then by the isoperimetric inequality that we just proved, and by  the strict concavity of the isoperimetric function it holds 
\beq\label{concav}
\J(\Om)=\J(\Om_1)+\J(\Om\setminus \Om_1)\geq J_{\lambda,\textbf{H}}(B^\lambda[v_1])+J_{\lambda,\textbf{H}}(B^\lambda[|B^\lambda|-v_1])>J_{\lambda,\textbf{H}}(B^\lambda),
\eeq 
which is a contradiction. Hence, $\Om$ is connected.

Let $u :\bar \Omega \to \R$ be the variational  solution of the Neumann boundary problem  \eqref{eq:neumann-0}. Since  $\J(\Om)= J_{\lambda,\textbf{H}}(B^\lambda)$, then  all the inequalities in \eqref{eq:cabre} become equalities and $|\Omega| = |\Omega^+|$. The equality in the arithmetic-geometric mean inequality means that 
\[
 \det\nabla^2u=\frac{(\Delta u)^N}{N^N}=1  \text{ and thus all the eigenvalues of $\nabla^2u$ are equal to 1 in }\Om.
\]
  Hence,    $\nabla^2u=I$ in $\Om$ and thus, by the connectedness of $\Om$, we infer that there exists $x_0\in \R^N$ such that, up to an additive constant, $u(x)=\frac{1}2|x-x_0|^2$ in $\Om$. Moreover, it follows from the definition of the set $\Omega^+$ that   $|\nabla u(x)|<1$ for all $x\in \Om^+$. Since the equalities in \eqref{eq:cabre} imply  that $|\Omega \setminus\Omega^+|= 0$, we have $|\nabla u(x)|=|x-x_0|\leq1$ for all $x\in \Om$. In other words, $\Om\subset B_1(x_0)$. In turn, since $1=\pa_\nu u(x)=(x-x_0)\cdot	\nu_{\Om}(x)$ for all $x\in\pa\Om\setminus \Co$,  we have necessarily $\pa\Om\setminus \Co\subset\pa B_1(x_0)$.

Let now $\hat x\in \Gamma\setminus \gamma$ and consider the half-space 
$$
H:=\{y\in \R^N:\, (y-x_0)\cdot\nu_{\Co}(\hat x)>\lambda\}.
$$
Since $-\lambda= \pa_\nu u(\hat x)=-(\hat x-x_0)\cdot\nu_{\Co}(\hat x)$, it follows that $\Co$ is contained in 
$\R^N\setminus H$ and that $\pa H$ is tangent to $\Co$ at $\hat x$. On the other hand, for any  nontangential direction $v\in \S^{N-1}$ such that $v\cdot \nu_{\Co}(\hat x)>0$, all the points of the half line $\hat x+tv$, $t>0$,  inside $B_1(x_0)$  are contained in $\Om$, since otherwise  the same half line would hit a point of $\pa \Om$ in $B_1(x_0)\\setminus \Co$, which is impossible since $\pa\Om\setminus \Co\subset\pa B_1(x_0)$.  Thus, we may conclude that $B_1(x_0)\cap H\subset \Om$. Since $B_1(x_0)\cap H$ is a spherical cap isometric to $B^\lambda$ (and since $|\Om|=|B^\lambda|$) we conclude that $\Om=B_1(x_0)\cap H$, thus concluding the proof of the proposition.

\end{proof}
\begin{remark}
Note that if  $N=3$ and the relative isoperimetric problem
\[
\min\{\J(E): E\subset\R^N\setminus \Co \text{ measurable, with }|E|=v\}
\]
has a solution $\Om$, then, due to the the regularity results of \cite{Taylor77}, see also \cite{De-PhilippisMaggi15},  $\Om$ satisfies \eqref{Omreg}, provided $\Co$ is of class $C^{1,1}$. Therefore if $\Co$ satisfies the $\lambda$-ABP property
 Theorem~\ref{th:ABP} yields the relative isoperimetric inequality for all sets of finite perimeter and fully characterizes the case of equality.
 \end{remark} 
We conclude this section by showing that the $0$-ABP property holds for all convex sets with nonempty interior.
\begin{proposition}\label{prop:0ABP}
Let $\Co\subset\R^N$ be a closed convex set of class $C^1$. Then $\Co$ satisfies the $0$-ABP property.
\end{proposition}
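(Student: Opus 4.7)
The proof plan is to exploit the geometric consequence of Lemma~\ref{lem:v-gamma} (namely, that each subdifferential is invariant under translation along its associated outer normal), together with a simple reflection argument, to compare volumes of half-slices of each piece of the convex partition.

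First I would fix a finite set $K=\{x_1,\dots,x_k\}\subset\partial\Co$, a function $v:K\to\R$, and set $A_i:=J_K v(x_i)$ and $\nu_i:=\nu_\Co(x_i)$. By Remark~\ref{rm:ovv}, $\bigcup_i A_i=\R^N$, and since each $A_i$ is convex and they have pairwise disjoint interiors, they form a convex partition of $\R^N$. Unraveling the definition \eqref{blambdau} for $\lambda=0$, we must prove
\[
|\mathcal B_v^0\cap B_1|=\sum_{i=1}^k\bigl|A_i\cap B_1\cap H_i^+\bigr|\ \geq\ |B^0|=\tfrac{1}{2}|B_1|,
\]
where $H_i^\pm:=\{\xi\in\R^N:\pm\xi\cdot\nu_i>0\}$.

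The key step is to show the \emph{local half-space inequality}
\[
\bigl|A_i\cap B_1\cap H_i^+\bigr|\ \geq\ \bigl|A_i\cap B_1\cap H_i^-\bigr|
\]
for each $i$. To this end I would consider the reflection $T_i(\xi):=\xi-2(\xi\cdot\nu_i)\nu_i$ through the hyperplane $\nu_i^\perp$. Given $\xi\in A_i\cap H_i^-$, one has $T_i(\xi)=\xi+t\nu_i$ with $t=-2(\xi\cdot\nu_i)>0$, so by Lemma~\ref{lem:v-gamma}, $T_i(\xi)\in A_i$; moreover $T_i(\xi)\cdot\nu_i=-\xi\cdot\nu_i>0$, hence $T_i(\xi)\in A_i\cap H_i^+$. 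Since $T_i$ is an isometry, it preserves both Lebesgue measure and the unit ball $B_1$. Thus $T_i$ restricts to a measure-preserving injection from $A_i\cap B_1\cap H_i^-$ into $A_i\cap B_1\cap H_i^+$, proving the displayed inequality.

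Combining the two inequalities with the fact that $\{\xi\cdot\nu_i=0\}$ has Lebesgue measure zero yields
\[
\bigl|A_i\cap B_1\cap H_i^+\bigr|\ \geq\ \tfrac{1}{2}|A_i\cap B_1|,
\]
and summing over $i$, using that $\{A_i\}$ is an essentially disjoint partition of $\R^N$, gives
\[
|\mathcal B_v^0\cap B_1|\ \geq\ \tfrac{1}{2}\sum_{i=1}^k|A_i\cap B_1|\ =\ \tfrac{1}{2}|B_1|\ =\ |B^0|,
\]
which is the desired $0$-ABP property. There is no serious obstacle here; the whole argument is driven by Lemma~\ref{lem:v-gamma}, and the fact that for $\lambda=0$ the reflection through $\nu_i^\perp$ lands exactly in the regime where Lemma~\ref{lem:v-gamma} applies. (For $\lambda\neq 0$ the analogous reflection would need to be shifted, and it is no longer guaranteed to take values in the same $A_i$, which is precisely why the general case requires the much more delicate combinatorial analysis referred to in the introduction.)
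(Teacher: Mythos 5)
Your proof is correct and follows essentially the same route as the paper: the paper also splits each cell $J_Kv(x_i)$ into the two half-cells $\{\pm\,\xi\cdot\nu_\Co(x_i)>0\}$ and uses Lemma~\ref{lem:v-gamma} to map $\xi\mapsto\xi+2\lambda\nu_\Co(x_i)$ with $\lambda=-\xi\cdot\nu_\Co(x_i)$, which is exactly your reflection $T_i$, to conclude $|J_Kv(x_i)^+\cap B_1|\geq\tfrac12|J_Kv(x_i)\cap B_1|$ before summing over the partition. No gaps; your write-up just makes explicit the isometry/measure-preservation points that the paper leaves implicit.
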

\begin{proof}
 Let $K=\{x_1, \dots, x_n\}$ be any discrete subset of $\pa\Co$ and let $v:K\to \R$.  Recall that $\displaystyle\bigcup_{i=1}^nJ_K v(x_i)=\R^N$ (see Remark~\ref{rm:ovv}) and that  the sets  $J_K v(x_i)$ are  a convex, in fact  a finite intersection of half-spaces,   and they have disjoint interiors. Moreover  Lemma~\ref{lem:v-gamma} implies that they have the  property 
\beq\label{recall}
\xi\in J_K v(x_i)\Longrightarrow \xi+t\nu_{\Co}(x_i)\in J_K v(x_i) \text{ for all $t>0$.}
\eeq

 
 Now, up to a set of Lebesgue measure zero, we may split $J_K v(x_i)=J_K v(x_i)^+\cup J_K v(x_i)^-$, where
\[
J_K v(x_i)^\pm:=\{\xi\in J_K v(x_i):\, \pm\, \xi\cdot\nu_{\Co}(x_i)>0 \}.
\]
Let us then fix   $\xi\in J_K v(x_i)^-$ and denote $\lambda = -\xi \cdot \nu_{\Co}(x_i)>0$. Then by \eqref{recall}  the symmetric point $\hat\xi = \xi + 2 \lambda \nu_{\Co}(x_i)$  belongs to $J_K v(x_i)^+$. Hence 
$|J_K v(x_i)^+\cap B_1|\geq \frac12 |J_K v(x_i)\cap B_1|$. In turn,
\beq\label{0ABPcrucial}
|\mathcal B^0_{v}|=\sum_{i=1}^n|J_K v(x_i)^+\cap B_1|\geq \frac12\sum_{i=1}^n |J_K v(x_i)\cap B_1|=\frac12 |B_1|,
\eeq
which is the desired estimate. From the arbitrariness of $K$, we conclude that $\Co$ satisfies the $0$-ABP property.
\end{proof}

\begin{remark}\label{rm:choeghomiritore}
By combining the previous proposition with Proposition~\ref{th:ABP} we recover an ABP-proof of the relative isoperimetric inequality outside convex sets obtained by Choe, Ghomi and Ritor\'e in \cite{ChGhoRi07}. Note that an ABP-argument for the same inequality has been already provided in \cite{LiuWangWeng}. However, in their argument our crucial estimate \eqref{0ABPcrucial} is replaced by a  geometric estimate based on normal cones and inspired by the techniques in \cite{ChGhoRi06}, see \cite[Proposition~2.4]{LiuWangWeng}.
\end{remark}
\section{The isoperimetric inequality outside convex cylinders}\label{three}

\subsection{Convex cylinders have the $\lambda$-ABP property}\label{sec:3.1}
 The purpose of this section is to prove the following: 
 \begin{theorem}\label{th:lambdaABP}
Let  $\Co$ be of the form $\C\times\R^{N-2}$, where $\C\subset\R^2$ is a closed  convex set of class $C^1$ with nonempty interior. Then $\Co$ satisfies the $\lambda$-ABP property for every $\lambda\in(-1,1)$.
 \end{theorem}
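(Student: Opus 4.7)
The plan is to reduce the statement to a planar inequality via slicing along the axis of the cylinder, and then to establish the planar inequality by analysing the left-hand side as a function of $\lambda$.

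First I would perform the slicing reduction. Since $\Co = \C \times \R^{N-2}$, the outer normal at any $x = (x', x'') \in \pa\Co$ is $\nu_\Co(x) = (\nu_\C(x'), 0)$, lying in the $(e_1,e_2)$-plane. Fix a finite $K \subset \pa\Co$ and $v : K \to \R$, and set $A_i := J_K v(x_i)$. By Lemma~\ref{lem:v-gamma} each $A_i$ is invariant under translations by $\R_+\nu_\Co(x_i)$, so for every $\xi'' \in \R^{N-2}$ the planar slice $A_i(\xi'') := \{\xi' \in \R^2 : (\xi',\xi'') \in A_i\}$ is a convex subset of $\R^2$ whose recession cone still contains $\R_+ \nu_\C(x_i')$. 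A direct computation (shifting the values of $v$ by $\xi'' \cdot x_i''$) shows that $\{A_i(\xi'')\}_i$ is again the family of subdifferentials of a function on the projected set $\{x_1', \dots, x_n'\} \subset \pa\C$. Using Fubini together with the corresponding decomposition $|B^\lambda| = \int_{B_1^{N-2}} |D^\lambda_{r(\xi'')}|\, d\xi''$, with $r(\xi'') = \sqrt{1 - |\xi''|^2}$ and $D^\lambda_r := \{\eta \in \R^2 : |\eta| < r,\ \eta \cdot e_2 > \lambda\}$, together with Remark~\ref{rm:ABPscaled}, the theorem reduces to the planar $\lambda$-ABP property for $\C \subset \R^2$.

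Next I would set up the planar problem analytically. Let $\{A_i\}$ be the planar subdifferential partition of $\R^2$, with distinguished normals $\nu_i \in \S^1$ satisfying $A_i + \R_+ \nu_i \subset A_i$. Define
\[
f(\lambda) := \sum_{i=1}^n \bigl|A_i \cap B_1 \cap \{\xi \cdot \nu_i > \lambda\}\bigr|, \qquad g(\lambda) := |B_1^\lambda|,
\]
where $B_1 \subset \R^2$. Both are Lipschitz on $(-1,1)$, with $f(-1^+) = |B_1| = g(-1^+)$ (since the $A_i$ partition $\R^2$ and $\xi \cdot \nu_i > -1$ throughout $B_1$) and $f(1^-) = 0 = g(1^-)$. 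By the coarea formula, for a.e.\ $\lambda$,
\[
-f'(\lambda) = \sum_{i=1}^n \H^1\bigl(A_i \cap B_1 \cap \{\xi \cdot \nu_i = \lambda\}\bigr), \qquad -g'(\lambda) = 2\sqrt{1 - \lambda^2}.
\]
Lemma~\ref{lem:v-gamma} also yields the key monotonicity of $t \mapsto \H^1(A_i \cap \{\xi \cdot \nu_i = t\})$ in $t$.

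The hard part will be comparing $-f'$ and $-g'$ in order to conclude $f \geq g$ on $(-1,1)$. Since the total integrals of $-f'$ and $-g'$ on $(-1,1)$ coincide, what one really needs is the stochastic-dominance-type estimate $\int_\lambda^1 (-f'(s))\,ds \geq \int_\lambda^1 (-g'(s))\,ds$ for every $\lambda$, that is, the mass of $-f'$ must be pushed towards the larger values of $s$ compared to the chord-length density $2\sqrt{1-s^2}$. My strategy is to exploit the cyclic ordering of the $\nu_i$ on $\S^1$ inherited from the ordering of the $x_i$ along $\pa\C$ (by convexity), decomposing $f(\lambda)$ into contributions coming from angular sectors between consecutive normals, and comparing each sector's contribution to the corresponding chord of $B_1$. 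The main obstacle is that the level sets $\{\xi \cdot \nu_i = \lambda\}$ are chords of $B_1$ in different directions, so no direct additivity is available: the reflection trick $\xi \mapsto \xi + 2|\xi \cdot \nu_i|\,\nu_i$ that trivialises the case $\lambda = 0$ in Proposition~\ref{prop:0ABP} has no immediate counterpart when $\lambda \neq 0$. Combined with the monotonicity of the slice lengths and the partition identity $\sum_i \mathbf{1}_{A_i} = 1$ a.e., this should lead to a differential inequality for $f - g$ that can be integrated from the boundary values $f(\pm 1^\mp) = g(\pm 1^\mp)$. I expect this bookkeeping — essentially a combinatorial-geometric problem for planar convex partitions whose recession rays are the outer normals of an ordered family of points on a convex planar curve — to be the real obstacle, and to be precisely where the restriction to two-dimensional normal images becomes indispensable.
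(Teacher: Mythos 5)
Your slicing reduction and the choice to study the left-hand side as a function of $\lambda$ match the paper's strategy, and the boundary values $f(\pm 1^\mp)=g(\pm 1^\mp)$ are correctly identified. (One refinement the paper makes that you miss: it does not stop at the planar disk but slices once more into circles $\pa D_\varrho$ and rescales, so that the relevant $\lambda$-derivative becomes a \emph{point count} $\sum_i\H^0(A_i\cap\pa H^\lambda_{\nu_i}\cap\pa D)$ rather than a sum of chord lengths, and the target derivative bound becomes the clean combinatorial statement ``at least $2$ points''. Also, the planar slices need not literally be subdifferentials of a function on the projected points when two $x_i$ share the same projection; what the paper actually uses is only that the cells form a convex partition of $\R^2$ with the recession property $z\in A_i,\ t>0\Rightarrow z+t\nu_i\in A_i$.)

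The genuine gap is that you stop exactly where the theorem begins. Your own text concedes that the stochastic-dominance estimate $\int_\lambda^1(-f')\ge\int_\lambda^1(-g')$ is ``the real obstacle'' and offers only a plan (``cyclic ordering of the $\nu_i$'', ``a differential inequality for $f-g$'') with no argument. Two concrete problems with that plan. First, the pointwise derivative comparison $-f'\ge -g'$ is simply false near $\lambda=-1$ in general (and correspondingly the paper only proves $\sum_i\H^0\ge 2$ on a subinterval), so no single differential inequality integrated from the endpoints can work; the paper handles the range $\lambda\le\Lambda:=\max_i\ell_i$ by a separate induction on the number of cells, using a \emph{deleting procedure} that removes a cell and enlarges only its neighbours. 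Second, even on the good range the bound ``at least two intersection points'' is not a consequence of cyclic ordering of normals alone: it requires (a) the quantitative Lemma~\ref{lm:crucial}, which shows that the second-largest exit value $m_2$ dominates the level $l$ above which the cell containing the origin contributes two points, and (b) a case analysis around \emph{disconnecting} and \emph{extremal} cells (Lemma~\ref{lm:discon}), with a second application of the deleting procedure and the inductive hypothesis to cover the interval $[\overline M,1]$ before the two-point count takes over on $(\Lambda,\overline M)$. None of these ingredients, nor substitutes for them, appear in your proposal, so as written it establishes the reduction but not the theorem.
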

 
 In order to prove the above theorem we need to introduce some notation. 
  In the following we denote  generic vector $\xi\in\R^N$ by $(z,w)$ where $z\in\R^2$, $w\in\R^{N-2}$.  Given  $E\subset\R^N$ and $w\in\R^{N-2}$ we set
 $$
 E_w:=\{z\in\R^2:\,(z,w)\in E\}.
 $$
 In the following, given $\lambda\in\R$, $\nu\in\R^2$, we set
\beq \label{def:H-lambda}
 H^\lambda_\nu=\{z\in\R^2:\,z\cdot\nu>\lambda\}.
 \eeq
When $\nu=e_2$ we will simply write $H^\lambda$ instead of $H^\lambda_{e_2}$. We denote by $D_r\subset\R^2$ the open disk of radius $r$ centered at the origin and set $D=D_1$.

Let $K=\{x_1, \dots, x_n\}$ be any finite subset of $\Co$ and let $v:K\to \R$ be any function. The proof of Theorem~\ref{th:lambdaABP} will follow if we show that 
\beq\label{prop:lambdaABP1}
 |\mathcal B^\lambda_{v} \cap B_1|\geq|B^\lambda| ,
\eeq
where $\mathcal B^\lambda_{v}$ is defined in \eqref{blambdau}.
 To prove \eqref{prop:lambdaABP1} we will show that 
 \beq\label{prop:lambdaABP2}
  \H^2( (\mathcal B^\lambda_{v})_w \cap D_r)\geq \H^2((B^{\lambda,e_2})_w\cap D_r)
\eeq
 for all $\lambda\in(-1,1)$, $w\in\R^{N-2}$ with $|w|<1$ and $r = \sqrt{1-|w|^2}$. Since $\nu_{\Co}(x)\in\{\xi\in\R^N:\,\xi=(z,0)\}\approx\R^2$ for all $x\in\pa\Co$ 
 \begin{align*}
 (\mathcal B^\lambda_{v})_w \cap D_r
 &= \bigcup_{x\in K} \Big\{z\in D_{\sqrt{1-|w|^2}}:\, (z,w)\in J_K v(x),\,z\cdot\nu_\Co(x)> \lambda\Big \}, \\
 (B^{\lambda,e_2})_w &=H^\lambda\cap D_{\sqrt{1 -|w|^2}}.
\end{align*}
 Indeed, we shall prove a stronger inequality than \eqref{prop:lambdaABP2}, that is that for any $|w|<1$ 
 $$
\qquad\qquad  \H^2\Big(\bigcup_{x\in K} \{z\in D_r:\, (z,w)\in J_K v(x),\,z\cdot\nu_\Co(x)> \lambda \}\Big)\geq \H^2(H^\lambda\cap D_r), 
 $$
for all  $r\in(0,1)$. In turn, this inequality will follow from the inequality
 $$
 \qquad\qquad  \H^1\Big(\bigcup_{x\in K} \{z\in\pa D_\varrho : \, (z,w)\in J_K v(x),\,z\cdot\nu_\Co(x)> \lambda \}\Big)\geq \H^1(H^\lambda\cap\pa D_\varrho),  
$$
for all  $\varrho \in(0,1)$.   By rescaling, this last inequality is equivalent to
 \beq\label{prop:lambdaABP2.1}
  \H^1\Big(\bigcup_{x\in K}\{z'\in\pa D : \, (z',w')\in J_K v_\varrho(x),\,z'\cdot\nu_\Co(x)>\lambda' \}\Big)\geq\H^1(H^{\lambda'}\cap\pa D),
\eeq
 where we have set $w'=\frac{w}{\varrho}\in\R^{N-2}$, $\lambda'=\frac{\lambda}{\varrho}\in\R$, $u_\varrho=\frac{u}{\varrho}$.
 Note that since for all $w'$
 $$
 \bigcup_{x\in K}\Big \{z'\in\R^2: \, (z',w')\in J_K v_\varrho(x)\Big\}=\R^2,
 $$
 \eqref{prop:lambdaABP2.1} is trivially satisfied if $\lambda'\geq1$ or $\lambda'\leq-1$. Therefore, we are ultimately bound to show that for any function $v:K\to\R$, any $w\in\R^{N-2}$ and any $\lambda\in(-1,1)$
 \beq\label{prop:lambdaABP3}
  \H^1\Big(\bigcup_{x\in K}\{z\in\pa D : \, (z,w)\in J_K v(x),\,z\cdot\nu_\Co(x)>\lambda \})\geq \H^1(H^\lambda\cap\pa D).
  \eeq
  The rest of the section is devoted to the proof of \eqref{prop:lambdaABP3}.  

 Given $w\in\R^{N-2}$,  for all $i=1,\dots,n$ we define the {\it cell} $A_i$ and the normal $\nu_i$ associated with it as
 \beq\label{prop:lambdaABP3.1}
 A_i=\{z\in\R^2:\,(z,w)\in J_K v(x_i)\}  \quad \text{and} \quad  \nu_i=\nu_{\Co}(x_i).
\eeq
Recall that each $A_i$ is convex and more precisely is obtained as a  finite intersection  of half planes and that the $A_i$'s have pairwise disjoint interiors and their union is the whole $\R^2$. 
Moreover, from Lemma~\ref{lem:v-gamma} we have that  if $\xi\in J_K v(x_i)$ then $\xi+t\nu_i\in J_\Gamma(x_i)$ for all $t>0$. Therefore 
 \beq\label{prop:lambdaABP4.1}
z\in A_i, \, t>0\quad\Longrightarrow\quad z+t\nu_i\in A_i.
\eeq
In what follows we say $A_i$ and $A_j$ are {\it neighboring cells}  if $\H^1(\pa A_i\cap \pa A_j)>0$. 

We prove  \eqref{prop:lambdaABP3} by induction and to this aim  we need the following  deleting procedure. 
 Remove a point $x_j$ from $K$, set $\widetilde K=K\setminus{x_j}$ and denote by $\tilde v$ the restriction of $v$ to $\widetilde K$ and by $J_{\widetilde K}\tilde v(x_i)$ the corresponding subdifferentials. Finally, define  $\widetilde A_i$, for $i\neq j$  as in \eqref{prop:lambdaABP3.1} with $K$ and $v$ replaced by $\widetilde K$ and $\tilde v$ respectively (and with $w$ unchanged). Clearly,  $J_{K}v (x_i)\subset J_{\widetilde K}\tilde v(x_i)$, hence $A_i\subset \widetilde A_i$, for all $i\neq j$. However, since also the interiors of the $\widetilde A_i$'s are mutually disjoint and convex  and $\bigcup_{i\neq j}{\widetilde A}_i=\R^2$, we have necessarily that
  \beq\label{prop:lambdaABP4.2}
\qquad\qquad \widetilde A_i=A_i \quad \text{if $A_i$ and $A_j$ are not neighboring cells}.
\eeq
We will refer to this procedure as {\it deleting a cell}. Note that if $A_j\cap D=\emptyset$, we may delete $A_j$ and this deletion does not affect the intersections of the remaining $A_i$'s  with $D$. Therefore, without loss of generality we may assume that all the $A_i$'s intersect $D$. 

We will say that $A_i$ is a {\it disconnecting cell} if $D\setminus A_i$ is disconnected, see Figure~\ref{fig2}. We will also say that a disconnecting cell $A_i$ is {\it extremal} if at least one of the two connected components of $\overline{D\setminus A_i}$ contains no disconnecting cells.
\begin{figure}
\begin{tikzpicture}
\begin{scope}[yscale=.5,xscale=.5]
\clip (-8, -6) rectangle (8, 6); 

\draw (0,0) circle (5); 
\filldraw[fill=black] (0,0) circle (.05);

\draw (0.5,0) node {$0$}; 

\draw[dashed] (1.5,5.5) -- (1,-5.5); 
\draw[dashed] (-1.7,5.5) -- (-0.3,-5.5); 

\draw[dashed] (-2,5.5) -- (-2.5,-5); 
\draw[dashed] (2.6,5) -- (3.5,-4.5); 
\draw[dashed] (3,1) -- (5.2, 1.5); 
\draw[dashed] (3.1,-0.5) -- (5.5, -1);

\draw (0.5,-2.5) node {$A_1$}; 
\draw (2,0.5) node {$A_2$}; 
\draw (-1.7,0.5) node {$A_3$};              
\draw (-3.7,-1) node {$A_5$};   

\draw (4,-2) node {$A_4$};   
\draw (4,0) node {$A_6$}; 
\draw (3.5,2) node {$A_7$};            

\draw[loosely dotted, fill=white!60!black,opacity=0.65, very thin]   (1.5,4.8) -- (1,-4.9) -- (-0.38,-5) -- (-1.6,4.76) -- cycle;               
\path[loosely dotted, fill=white!60!black,opacity=0.65, very thin] 
(1.5,4.8)  arc[start angle =73, end angle =108.7, radius = 5]  -- cycle;

\draw[loosely dotted, fill=white!60!black,opacity=0.2, very thin]   (1.5,4.8) -- (2.7,4.2) -- (3.45,-3.6) -- (1,-4.9) -- cycle    ;   
\path[loosely dotted, fill=white!60!black,opacity=0.2, very thin] 
(1,-4.9) arc[start angle =282, end angle =314, radius = 5]  -- cycle;

\draw[loosely dotted, fill=white!60!black,opacity=0.2, very thin]   (-2.05,4.55)  -- (-1.6,4.76) -- (-0.38,-4.98)   -- (-2.45,-4.35) -- cycle    ;      
\path[loosely dotted, fill=white!60!black,opacity=0.2, very thin] 
(-2.45,-4.35)  arc[start angle =240, end angle =266, radius = 5]  -- cycle;

\end{scope}
\end{tikzpicture}
\caption{An example of a configuration with seven cells. The cells $A_1, A_2$ and $A_3$ are disconnecting and $A_2$ and $A_3$ are also extremal.}\label{fig2}
\end{figure}
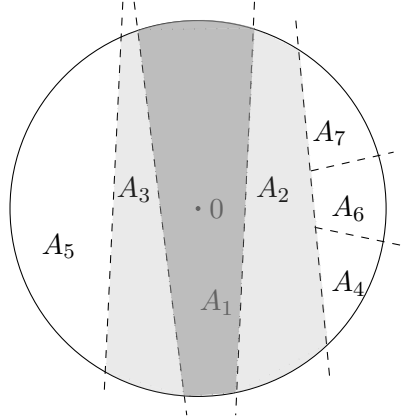
 
We have the  following lemma.
\begin{lemma}\label{lm:discon}
Let $A_i$ be a disconnecting cell. Then  either $A_i$ is the only the disconnecting cell (hence extremal) or  there are at  least two extremal disconnecting cells. 
\end{lemma}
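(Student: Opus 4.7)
The strategy is to encode the disconnecting cells as the edges of a tree whose vertices are the residual regions they carve out of $D$, and then invoke the elementary fact that a tree with at least three vertices has at least two leaves. First I would record the basic observation that if $A_i$ is disconnecting then $D\setminus A_i$ has \emph{exactly} two connected components: since $A_i\cap\overline D$ is convex, it meets $\partial D$ in at most two arcs, and the disconnection forces exactly two arcs, so that $\partial A_i\cap\overline D$ is a single convex polygonal arc joining two points of $\partial D$ and splitting the disk into two open pieces. The asymmetric definition of \emph{extremal} thus amounts to requiring that only one of these two sides of $A_i$ contains all the other disconnecting cells.

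Let $A_{i_1},\dots,A_{i_k}$ denote all of the disconnecting cells and write $R_1,\dots,R_m$ for the connected components of $D\setminus \bigcup_{j=1}^k A_{i_j}$. I would prove by induction on $k$ that $m=k+1$. For the inductive step, adding a single disconnecting cell $A_{i_k}$ to the previously placed ones: its interior is open, connected, and disjoint from all other cells, so it lies in a single component $R^*$ of the old complement; consequently $A_{i_k}\subset\overline{R^*}$, the two arcs of $A_{i_k}\cap\partial D$ lie in $\partial R^*\cap\partial D$, and the convex barrier $A_{i_k}\cap\overline{R^*}$ stretches between them and splits $R^*$ into exactly two subcomponents. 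Hence each disconnecting cell contributes exactly one to the component count.

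Next I would build a graph $G$ with vertex set $\{R_1,\dots,R_{k+1}\}$ and one edge for each disconnecting cell $A_{i_l}$, joining the two residual regions on its two sides. Since any two points of $D$ can be connected by a path that meets cell boundaries transversally and in only finitely many points, $G$ is connected; with $k+1$ vertices and $k$ edges it is therefore a tree. If $k=1$, the sole disconnecting cell is extremal by definition and the first alternative of the lemma holds. If $k\geq 2$ the tree has at least three vertices and hence at least two distinct leaves. For each leaf $R_\alpha$ the unique edge incident to it corresponds to a disconnecting cell $A_{i_l}$ such that $R_\alpha$ coincides with one whole component of $D\setminus A_{i_l}$ and contains no disconnecting cell, so $A_{i_l}$ is extremal. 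Distinct leaves in a tree with at least three vertices cannot share an edge, so two such leaves yield two distinct extremal cells.

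The hard part will be the inductive identity $m=k+1$, i.e., verifying that the newly added disconnecting cell $A_{i_k}$ truly subdivides the residual region $R^*$ containing its interior into exactly two pieces. This is a topological claim about a convex barrier inside a planar region whose boundary may be complicated (a mix of arcs of $\partial D$ and of other disconnecting cells, possibly sharing boundary segments with $A_{i_k}$), and will require tracking carefully how the two arcs of $A_{i_k}\cap\partial D$ sit in $\partial R^*$. Once this geometric step is settled, the leaf-counting combinatorics on the tree $G$ is routine.
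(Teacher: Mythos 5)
Your overall strategy (residual regions as vertices, disconnecting cells as edges, leaves of the resulting tree as extremal cells) is plausible, but as written it is not a proof: the entire content of the lemma is concentrated in the step you yourself defer, namely that the $k$ disconnecting cells cut $D$ into exactly $k+1$ residual regions, equivalently that the incidence graph $G$ is a tree. Without this nothing downstream works. The connectivity of $G$ already presupposes that each disconnecting cell is adjacent to exactly two residual regions (a path crossing a cell could otherwise exit into a region different from the two your edge is declared to join), the identification of a leaf with a \emph{whole} component of $D\setminus A_{i_l}$ uses the same fact, and acyclicity is what forces the existence of two leaves: a connected graph on $m$ vertices with $k$ edges and $m<k+1$ may have fewer than two vertices of degree one. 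So the identity $m=k+1$ is not a technical refinement but the lemma in disguise, and calling the remaining combinatorics ``routine'' after postponing it does not close the argument. A secondary inaccuracy: your justification that $A_i\cap\overline D$ meets $\partial D$ in at most two arcs ``since it is convex'' is false (a large triangle meets the unit circle in three arcs, and one can even build convex sets with the half-line property \eqref{prop:lambdaABP4.1} whose complement in $D$ has three components). The paper simply builds the two-component structure into the definition of extremality, so you should either take it as given in the same way or supply a correct argument; convexity alone does not deliver it.

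By contrast, the paper's proof avoids all global bookkeeping and is a finite descent: if a component $C_1$ of $\overline{D\setminus A_i}$ contains a disconnecting cell $A_j$, pass to the component of $\overline{D\setminus A_j}$ on the far side from $C_2$; the collection of disconnecting cells contained in the successive components strictly decreases, so after finitely many steps one reaches an extremal cell inside $C_1$, and if $A_i$ itself is not extremal the same descent inside $C_2$ produces a second one. I would recommend either adopting this two-sided descent, or, if you wish to keep the tree picture, actually proving the splitting statement (each new disconnecting cell subdivides exactly one residual region into exactly two), since that is where all the difficulty of the lemma lives.
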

\begin{proof}
 Denote by $C_1$ and $C_2$ the components of $\overline {D\setminus A_i}$ and assume that one of them, say $C_1$, contains a disconnecting cell $A_j$. Consider the component of  $\overline {D\setminus A_j}$ that does not contain $C_2$. Either this component contains no disconnecting cells (which makes $A_j$ extremal) or we iterate this procedure until we get after finitely many steps an extremal disconnecting cell contained in $C_1$. Now, if $A_i$ is not extremal, we apply the same procedure in $C_2$ to find a second extremal disconnecting cell. 
\end{proof}

We introduce  the following useful quantities: for every $i=1,\dots, n$, we set  (see Figure~\ref{fig1})
\beq\label{entry}
\begin{split}
&\ell_i:=\min\{z\cdot \nu_i:\, z\in A_i\cap \pa D\} \quad\text{(entry value of $A_i$)}\\
&m_i:=\max\{z\cdot \nu_i:\, z\in A_i\cap \pa D\} \quad\text{(exit value of $A_i$)},
\end{split}
\eeq 

\begin{figure}
\begin{tikzpicture}
\begin{scope}[yscale=.5,xscale=.5]
\clip (-8, -6) rectangle (8, 6); 

\draw (0,0) circle (5); 
\filldraw[fill=black] (0,0) circle (.05);

\draw (0.5,0.5) node {$0$}; 
\draw[dashed] (-6,1) -- (3,1); 
\draw[dashed] (3,1) -- (3,5);   
\draw[dashed] (3,1) -- (6,-1);   
\draw (4,1.3) node {$A_3$}; 
\draw (-2,2.5) node {$A_2$}; 
\draw (1,-3) node {$A_1$}; 

\draw (5.2,-3.2) node {$\nu_1$};
\draw[->] (4.5,-3.5) -- (5.2,-4); 

\draw[->] (0, -6) -- (0,6); 

\draw[->] (4.5, 4) -- (4.5,4.8); 
\draw (5.3 ,4.3) node {$\nu_3$};

\draw[->] (-5, 2) -- (-5.9,2.2);
\draw (-5 ,3) node {$\nu_2$};         

\draw[densely dotted]  (3,4) -- (-0.15,4); 
\draw (-0.6 ,4) node {$m_3$};
    
\draw[densely dotted]  (5,-0.33) -- (-0.15,-0.33); 
\draw (-0.6 ,-0.33) node {$\ell_3$};

\draw[loosely dotted, fill=white!60!black,opacity=0.65, very thin]   (3,1)-- (3,4) -- (5,-0.33)  -- cycle    ;

\path[loosely dotted,fill=white!60!black,opacity=0.65, very thin] 
(5,-0.315) arc[start angle = -3, end angle =52.8, radius = 5]  -- cycle;

\draw[loosely dotted,fill=white!60!black,opacity=0.25, very thin]  (-4.9,1) -- (3,1)-- (3,4)  -- cycle  ;
\path[loosely dotted, fill=white!60!black,opacity=0.25, very thin] 
(3,4)  arc[start angle = 52.5, end angle =169, radius = 5]  -- cycle;

\draw[dotted, fill=white!40!black,opacity=0.05, very thin]  (-4.9,1) -- (3,1)-- (5,-0.33) -- cycle  ;
\path[dotted, fill=white!40!black,opacity=0.05, very thin] 
(-4.9,1)  arc[start angle =169, end angle =355.5, radius = 5]  -- cycle;
\end{scope}
\end{tikzpicture}

\caption{An example of a configuration with three cells with the associated normals $\nu_1, \nu_2$ and $\nu_3$. The entry value of the third cell is $\ell_3$ and the exit value is $m_3$.}\label{fig1}
\end{figure}
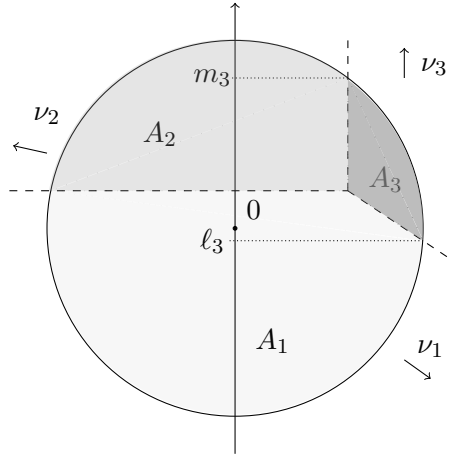

Note that by \eqref{prop:lambdaABP4.1} it is easily checked that if $0\in A_i$ then $m_i=1$ and that $m_j> 0$ for all $j$.
 By  reordering the indices  in what follows we may assume that $0\in A_1$ and that 
 \beq\label{ordering}
 1=m_1\geq m_2\geq\dots..\geq m_n.
 \eeq
 Note also that if $A_i$ is not disconnecting, then it holds  that 
 \beq\label{pointprop}
 \lambda\in [\ell_i, m_i]\Longrightarrow   \text{there exists }\,  z\in A_i\cap \pa D\text{ s.t. } z\cdot \nu_i=\lambda. 
  \eeq
  Indeed, if $A_i$ is not disconnecting, then   the intersection of $A_i$ with $\pa D$ is a connected arc and therefore the function $z\mapsto z\cdot  \nu_i$ takes all values in $ [\ell_i, m_i]$ along such arc. 

 We also define 
  $$
 l:=\max\{z\cdot \nu_1:\, z\in \pa A_1\cap \pa D\}
 $$
 and note that $\ell_1\leq l\leq1$. Moreover, if  $l<1$ and $\lambda\in (l, 1)$ (recall that $m_1=1$), then the segment $D\cap \pa H^\lambda_{\nu_1}$ does not intersect $\pa A_1$ and therefore 
 \beq\label{twopoint0}
 \H^0(\pa D\cap \pa H^\lambda_{\nu_1}\cap A_1)=2\quad \text{for all }\lambda\in (l, 1).
 \eeq
 \begin{lemma}\label{lm:crucial}
 It holds $m_2\geq l$. 
 \end{lemma}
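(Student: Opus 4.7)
The inequality is immediate when $l\le 0$, since $m_j>0$ for every $j$ (noted right after \eqref{pointprop}) gives $m_2>0\ge l$. So assume $l>0$ and, after a rotation, take $\nu_1=e_2$, so $A_1$ is convex, contains the origin, and is invariant under upward translation. Pick $z^*\in\partial A_1\cap\partial D$ realising $z^*\cdot\nu_1=l$; by reflection in $x_1\mapsto -x_1$ we may assume $z^*=(\sqrt{1-l^2},l)$. The point $z^*$ lies on an edge $L$ of $A_1$ shared with some neighbouring cell $A_j$, $j\ne1$, and the plan is to prove $m_j\ge l$ directly, so that $m_2\ge m_j\ge l$.

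Let $n$ be the unit normal to $L$ pointing from $A_j$ into $A_1$. Translation invariance \eqref{prop:lambdaABP4.1} applied to each cell gives $\nu_1\cdot n\ge0$ and $\nu_j\cdot n\le0$, while the inclusion $0\in A_1\subset\{w:\,w\cdot n\ge z^*\cdot n\}$ forces $z^*\cdot n\le0$. Now \eqref{prop:lambdaABP4.1} also ensures that the ray $\{z^*+t\nu_j:\,t\ge0\}$ is contained in $A_j$, and this already settles two sub-cases: if $z^*\cdot\nu_j\ge l$, the point $z^*\in A_j\cap\partial D$ gives $m_j\ge l$; if $z^*\cdot\nu_j\le -l$, the ray re-enters $\partial D$ at $q=z^*-2(z^*\cdot\nu_j)\nu_j\in A_j\cap\partial D$ with $q\cdot\nu_j=-z^*\cdot\nu_j\ge l$, again yielding $m_j\ge l$.

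The main difficulty is the intermediate range $|z^*\cdot\nu_j|<l$. Here I would use the stronger inclusion $A_j\supset L+\mathbb{R}_+\nu_j$, obtained by applying \eqref{prop:lambdaABP4.1} pointwise along the whole edge $L\subset\overline{A_j}$ rather than just at $z^*$. For any $p\in L\cap\bar D$ the ray $p+t\nu_j$ exits $\bar D$ at a point of $A_j\cap\partial D$ whose $\nu_j$-coordinate equals $\sqrt{1-|p\cdot\nu_j^\perp|^2}$, which is $\ge l$ exactly when $|p\cdot\nu_j^\perp|\le\sqrt{1-l^2}$. The condition $z^*\cdot n\le0$ tells us that the line supporting $L$ meets the open disk $D$, and the angular constraints on $n$ and $\nu_j$ ensure $L$ is not parallel to $\nu_j$; hence sliding $p$ along $L$ from $z^*$ in the direction making $p\cdot\nu_j^\perp$ decrease in absolute value eventually produces a point with $|p\cdot\nu_j^\perp|\le\sqrt{1-l^2}$, as needed.

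The technical obstacle is to certify that the edge $L$ really extends on the correct side of $z^*$ far enough for the sliding to stay inside $L$ (and hence inside $A_j$), rather than being blocked by an adjacent vertex of $A_1$. This calls for a case analysis based on whether $z^*$ is a vertex of $A_1$ or an interior point of an edge, exploiting that $A_1$, as an upward-invariant convex polygon containing the origin, has edges with a controlled extent. When $z^*$ is a vertex I would also examine the second edge of $\partial A_1$ meeting $z^*$ (and its other neighbouring cell), or pass to the opposite endpoint $z^*_-$ of the arc $A_1\cap\partial D$: a direct computation shows that the "bad" intermediate ranges for the neighbours of $z^*_+$ and of $z^*_-$ are disjoint, so at least one of these neighbours falls into one of the direct sub-cases of Step 2 and delivers $m_j\ge l$.
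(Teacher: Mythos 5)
Your reduction to $l>0$, the normalization $\nu_1=e_2$, and the two ``direct'' sub-cases ($z^*\cdot\nu_j\ge l$ and $z^*\cdot\nu_j\le -l$) are correct. But the heart of the lemma is precisely the intermediate range $|z^*\cdot\nu_j|<l$, and there your argument is not a proof: the existence of a point $p\in L\cap\overline D$ with $|p\cdot\nu_j^\perp|\le\sqrt{1-l^2}$ is exactly the step you flag as a ``technical obstacle'' and never establish. The edge $L$ may terminate at a vertex of $A_1$ before the sliding reaches such a $p$, and the fallback you invoke --- that the ``bad'' intermediate ranges for the neighbours at $z^*_+$ and $z^*_-$ are disjoint --- is asserted as ``a direct computation'' that is not supplied and is not evidently true (note also that $A_1\cap\partial D$ need not be a single arc when $A_1$ is a disconnecting cell, so the two ``endpoints'' $z^*_\pm$ are not even well defined in general; the lemma is invoked in the paper without excluding that case). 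As it stands the proposal proves the lemma only outside the regime where it has content.

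The paper's proof avoids this case analysis entirely by changing the base point: instead of working at the point $z^*$ realizing $l$, it takes $\bar z\in\partial A_1$ closest to the origin, with $d=|\bar z|=\dist(0,\partial A_1)$. Using a supporting half-plane at a point $\tilde z$ where $l$ is attained, together with the invariance \eqref{prop:lambdaABP4.1} of $A_1$ under translation by $e_2$, one gets the key estimate $d\le\sqrt{1-l^2}$. Then, for the neighbouring cell $A_i$ with $\bar z\in\partial A_i$, minimality of $|\bar z|$ over $A_i$ gives $\bar z\cdot\nu_i\ge 0$, and writing $1=\hat t^2+2\hat t\,\bar z\cdot\nu_i+d^2$ for the exit parameter $\hat t$ of the ray $\bar z+t\nu_i$ yields $m_i^2\ge\hat t^2+2\hat t\,\bar z\cdot\nu_i\ge l^2$, hence $m_2\ge m_i\ge l$ with no splitting into cases. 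If you want to salvage your approach you would need to actually carry out the missing geometric argument in the intermediate range; the cleaner route is to switch to the closest-point formulation.
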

 \begin{proof}
We may assume that $l \geq 0$ since otherwise the claim is trivially true as $m_2 >0$. Without loss of generality we may assume $\nu_1 =e_2$. Let $\bar z\in \pa A_1$ be such that $|\bar z|=\dist(0, \pa A_1)=:d$,  see Figure~\ref{fig3}.  We claim that 
 \beq\label{claim}
 d\leq \sqrt{1-l^2}.
 \eeq
 To this aim, let $\tilde z\in \pa A_1\cap\pa D$ be such that $l=\tilde z\cdot e_2$. Let $H$ be a (closed) half-plane such that $\tilde z\in \pa H$ and $A_1\subset H$. Since $A_1$ contains the origin and $A_1\subset H$ it holds $d = \dist(0, \pa A_1)\leq  \dist(0, \pa H)$. Let $\omega$ be the unit vector normal to $\pa H$ pointing towards $A_1$, i.e., $H = \{z : (z - \bar z) \cdot \omega \geq  0 \} $. Then using the property
 \eqref{prop:lambdaABP4.1} it holds $ \bar z +t e_2 \in A_1 \subset H $ for all $t>0$, which  implies  $\omega\cdot e_2\geq 0$. In turn, since $l\ge 0$, this implies that $\dist(0, \pa H)$ is less or equal than the distance of $\tilde z$ to the $e_2$-axis, which is given by $\sqrt{1-l^2}$ (see Figure~\ref{fig3}). Hence, the claim \eqref{claim} follows.
 
 \begin{figure}
\begin{tikzpicture}
\begin{scope}[yscale=.5,xscale=.5]
\clip (-8, -6) rectangle (8, 6); 

\draw (0,0) circle (5); 
\filldraw[fill=black] (0,0) circle (.05);

\draw[densely dotted] (0,0) circle (0.88);

\draw (0.5,0) node {$0$}; 

\draw[dashed] (-3,4.5) -- (-1.6,-0.5); 
\draw[dashed](-1.6,-0.5) -- (0,-1.5); 
\draw[dashed] (0,-1.5)-- (4,4); 
\draw[dashed] (0,-1.5)-- (2,-5.5); 

\draw (1.5,2) node {$A_1$}; 
\draw (3.5,-2) node {$A_i$};

\draw (0.7,5.3) node {$\nu_1$};
\draw[->] (0,0) -- (0,5.8);

\draw (5.5,2.4) node {$\nu_i$};
\draw[->] (0,0) -- (5,2.7);

\filldraw[fill=black] (0.7,-0.56) circle (.05);
\draw (1,-1) node {$\bar z$};

\draw[densely dotted]  (-2.9,4.05) -- (0.15,4.05); 
\draw (0.4,4) node {$l$};

\draw[densely dotted]  (3.62,3.42)  -- (4.3,2.1); 
\draw (4.1,1.7) node {$m_i$};

\draw[dotted]  (0.7,-0.56) -- (4.83,1.44); 
\filldraw[fill=black] (4.78,1.36) circle (.05);

\draw  (-3.5,4.05) node {$\tilde  z$};

\draw (6.3,1.4) node {$\hat t\nu_i + \bar z$};

\end{scope}
\end{tikzpicture}
\caption{Situation as in the proof of Lemma~\ref{lm:crucial}. The point $\bar z$ is chosen as the closest to the origin on $\pa A_1$. The cell $A_i$ is neighbour to $A_1$ such that $\bar z \in \pa A_i$.} \label{fig3}
\end{figure}
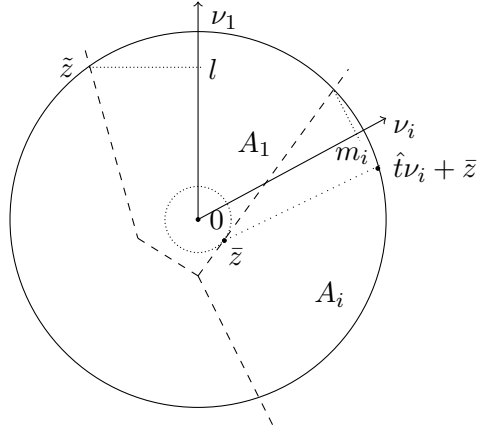

Since  $\bar z\in \pa A_1$ there is a neighboring  cell, say $A_i$, such that $\bar z\in \pa A_i$. Then the choice of $\bar z$ implies that $d =|\bar z|=\dist(0, A_i)$. On the other hand by the property 
\eqref{prop:lambdaABP4.1}  it holds $t\nu_i + \bar z \in  A_i$ for every $t>0$. Therefore $|t\nu_i + \bar z|^2 \geq|\bar z|^2$ for $t>0$ from which we deduce $\bar z \cdot \nu_i \geq 0$. 

Denote by $\hat t \in [0,1]$ the value such that $\hat t\nu_i + \bar z \in \partial D$. Then 
\[
1 = |\hat t\nu_i + \bar z|^2 =\hat t^2 + 2 \hat t \bar z \cdot \nu_i + |\bar z|^2 =\hat t^2 + 2 \hat t \bar z \cdot \nu_i +d^2.
\]
By using this and  \eqref{claim} we have 
 \beq\label{claim2}
\hat t^2 + 2 \hat t \bar z \cdot \nu_i  \geq l^2.
 \eeq
On the other hand, since  $\hat t\nu_i + \bar z \in  A_i$,  the definition of the exit value  implies 
\[
m_i \geq (\hat t\nu_i + \bar z) \cdot \nu_i = \hat t +  \bar z  \cdot \nu_i.  
\]
Since  $\bar z \cdot \nu_i \geq 0$, this implies 
\[
m_i^2 \geq  \hat t^2 +  2 \hat t \bar z  \cdot \nu_i + (\bar z  \cdot \nu_i)^2 \geq \hat t^2 +  2 \hat t \bar z  \cdot \nu_i 
\]
and the inequality of the lemma follows from  \eqref{claim2} and from the fact that $m_2 \geq m_i$.
 \end{proof}
 \begin{proof}[\textbf{Proof of Theorem~\ref{th:lambdaABP}}] We let  $K =\{x_1, \dots, x_n\}\subset\Co$ and recall  that we just need to show \eqref{prop:lambdaABP3}. 
 We proceed by defining 
\[
 \varphi_{K}(\lambda)=\sum_{i=1}^n\H^1(A_i\cap H^{\lambda}_{\nu_i}\cap\pa D), \quad \lambda\in[-1,1],
 \]
 where the cells $A_i$ and the associated normal $\nu_i$  are defined in \eqref{prop:lambdaABP3.1}  (for a fixed $w\in\R^{N-2}$) and the half-spaces $H^{\lambda}_{\nu_i}$ in \eqref{def:H-lambda}. Recall that the $A_i$'s have mutually disjoint interiors, they are finite intersections of half-spaces and $\bigcup_iA_i=\R^2$. 
Setting also $\varphi_H(\lambda)=\H^1(H^\lambda\cap\pa D)$, the  inequality \eqref{prop:lambdaABP3} can be rewritten as
 \beq\label{prop:lambdaABP4}
 \varphi_K(\lambda)\geq\varphi_H(\lambda)  \quad\text{for all } \lambda\in (-1,1).
 \eeq
 Note that 
 \beq\label{endpoints}
 \begin{split}
 & \varphi_K(-1)=2\pi=\varphi_H(-1), \\
& \varphi_K(1)=0=\varphi_H(1).
  \end{split}
 \eeq
 We prove the inequality by induction on the cardinality $n$ of $K$. Note that the conclusion is trivially true if $n=1$. 
 Therefore we assume the statement to be true for all $k\leq n-1$.

Note that a simple application of the area formula implies that $\varphi_K$ is locally Lipschitz continuous in $(-1,1)$ and that for a.e. $\lambda\in(-1,1)$
\beq\label{der}
\begin{split}
\varphi'_K(\lambda)&=-\sum_{i=1}^n\int_{A_i\cap\pa H^{\lambda}_{\nu_i}\cap\pa D}\frac{1}{\sqrt{1-(z\cdot\nu_i})^2}\,d\H^0(z)\\
&=-\frac{1}{\sqrt{1-\lambda^2}}\sum_{i=1}^n\H^0(A_i\cap\pa H^{\lambda}_{\nu_i}\cap\pa D),
\end{split}
\eeq
while
\beq\label{der2}
\varphi'_H(\lambda)=-\frac{2}{\sqrt{1-\lambda^2}}.
\eeq

Let us denote 
 \beq\label{maxentry}
 \Lambda:=\max_{i\in \{1,\dots, n\}}\ell_i,
 \eeq
  where the $\ell_i$'s are the {\em entry values} defined in \eqref{entry}. We begin by proving the inequality \eqref{prop:lambdaABP4} for all $\lambda \in [-1,\Lambda]$. Clearly we may assume that  $\Lambda>-1$. 

Let $j$ be  such that $\ell_j=\Lambda$ and 
 $\lambda\in[-1,\Lambda)$. Since $\lambda<\ell_j$, by definition of the entry value  we have $A_j\cap H^{\lambda}_{\nu_j}\cap\pa D=A_j\cap\pa D$, hence
$$
\varphi_{K}(\lambda)=\sum_{i\neq j}\H^1(A_i\cap H^{\lambda}_{\nu_i}\cap\pa D)+\H^1(A_j\cap\pa D).
$$
Denote by $\widetilde A_i$, $i\neq j$, the new partition obtained by deleting the cell $A_j$ (see the definition of this deleting procedure before \eqref{prop:lambdaABP4.2}).
By the induction assumption we have
$$
\sum_{i\neq j}\H^1(\widetilde A_i\cap H^{\lambda}_{\nu_i}\cap\pa D)\geq\varphi_H(\lambda) \qquad \text{for all } \,\lambda\in[-1,1].
$$
In turn, we conclude that if $\lambda\in[-1,\Lambda)$
\begin{align*}
\varphi_{K}(\lambda)
&=\sum_{i\neq j}[\H^1(A_i\cap\pa D)-\H^1((A_i\cap\pa D)\setminus H^{\lambda}_{\nu_i})]+\H^1(A_j\cap\pa D) \\
&=\sum_{i=1}^n\H^1(A_i\cap\pa D)-\sum_{i\neq j}\H^1((A_i\cap\pa D)\setminus H^{\lambda}_{\nu_i})\\
&\geq 2\pi-\sum_{i\neq j}\H^1((\widetilde A_i\cap\pa D)\setminus H^{\lambda}_{\nu_i})=\sum_{i\neq j}\H^1(\widetilde A_i\cap\pa D \cap H^{\lambda}_{\nu_i})\geq \varphi_H(\lambda),
\end{align*}
where the first inequality follows from the fact the $A_i$'s cover the whole plane and the fact that $A_i\subset \widetilde A_i$ for $i\neq j$.
This proves \eqref{prop:lambdaABP4} for $-1\leq\lambda<\Lambda$, hence  also for $1\leq\lambda\leq\Lambda$ by continuity.

In order to prove \eqref{prop:lambdaABP4} for $\lambda \in (\Lambda, 1)$ we distinguish two cases.

\vskip 0.2cm\noindent
 \textbf{Case 1.} We first assume that there are no disconnecting cells.  
 
We will show that 
\beq\label{twopoint}
\sum_{i=1}^n\H^0(A_i\cap\pa H^{\lambda}_{\nu_i}\cap\pa D)\geq 2 \quad\text{for }\lambda\in (\Lambda, 1).
\eeq
Indeed, note that once this is established, then  by \eqref{der} and \eqref{der2}  we get $\varphi'_K(\lambda)-\varphi'_H(\lambda)\leq 0$ for $\lambda \in (\Lambda, 1)$. That is, $\varphi_K-\varphi_H$ is  non-increasing in $ (\Lambda, 1)$. Recalling that $\varphi_K(1)-\varphi_H(1)=0$ by \eqref{endpoints}, the conclusion would follow. 

Let us therefore focus on the proof of  \eqref{twopoint}. Recall that $A_1$ is the cell containing the origin of $\R^2$ and that the cells are indexed in such a way that the {\em exit values} defined in \eqref{entry} satisfy \eqref{ordering}. Recall also that by  \eqref{twopoint0}
\[
 \H^0(\pa D\cap \pa H^\lambda_{\nu_1}\cap A_1)=2 \quad\text{for }\lambda\in (l, 1)
\]
and since $A_1$, $A_2$ are not disconnecting it holds by \eqref{pointprop} that 
\[
\H^0(\pa D\cap \pa H^\lambda_{\nu_i}\cap A_i)\geq 1 \quad\text{for }\lambda\in (\ell_i, m_i), \, i=1,2.
\]
The conclusion \eqref{twopoint} now follows recalling that by Lemma~\ref{lm:crucial} $m_2\geq l$.

\vskip 0.2cm\noindent
\textbf{Step 2}. Here we assume that there exists at least one disconnecting cell. 
By Lemma~\ref{lm:discon} we either have only one disconnecting cell or we can find two distinct extremal cells $A_{j_1}$, $A_{j_2}$. Let us assume from now on that the second case holds, the other one being analogous (in fact simpler). 
Let $C_1$ be the component of $\overline{D\setminus A_{j_1}}$, which does not contain $A_{j_2}$, and similarly let $C_2$ be the component of $\overline{D\setminus A_{j_2}}$, which does not contain $A_{j_1}$. Clearly $C_1$ and $C_2$ are disjoint. Let now $J_1$, $J_2$ be two disjoint set of indices such that 
\[
C_1=\bigcup_{i\in J_1}(A_i\cap\overline D)\quad\text{and}\quad C_2=\bigcup_{i\in J_2}(A_i\cap\overline D).
\]
(To illustrate the situation the reader may consider the case in Figure~\ref{fig2}, where  $A_{j_1}= A_2$, $A_{j_2} = A_3$,  $C_1 = (A_4 \cup A_6 \cup A_7) \cap \overline D$ and $C_2 = A_5\cap \overline D$.) Define also for $j=1,2$
$$
M_j:=\max_{i\in J_j}m_i\quad\text{and}\quad \overline M:=\min\{M_1, M_2\}.
$$

We claim that 
\beq\label{calim}
 \varphi_K(\lambda)\geq\varphi_H(\lambda)  \quad\text{for all } \lambda\in[\overline M,1].
\eeq
Indeed, without loss of generality we may assume $\overline M=M_1$ and $M_1<1$.   By deleting  the cells $A_i$  for all  $i\not\in J_1$ and $i\neq j_1$, we generate a new partition of the form $\{A_i\}_{i\in J_1}\cup\{\tilde A\}$, with $\tilde A\supset A_{j_1}$. Note that we used the fact that by this deleting procedure the cells in $C_1$ are not affected due to \eqref{prop:lambdaABP4.2}. Note also that $\tilde A$ contains the origin and satisfies \eqref{prop:lambdaABP4.1} with respect to $\nu_{j_1}$. Indeed, otherwise the origin would be in $C_1$ and this in turn would imply that $M_1=1$. Let us define 
\[
\tilde l:=\max\{z\cdot \nu_{j_1}:\, z\in \pa \tilde A\cap \pa D\}.
\]
Then, it holds 
$\{z\in D:\, z\cdot \nu_{j_1}>\tilde l\}\subset \tilde A$
and thus
\beq\label{C1}
C_1\subset \{z\in \overline D:\, z\cdot \nu_{j_1}\leq \tilde l\}.
\eeq
Moreover,  by Lemma~\ref{lm:crucial} we have
\beq\label{tilde l}
M_1\geq \tilde l.
\eeq

Next we need another deleting procedure from the original partition $\{A_i\}_{i=1, \dots, n}$; namely, this time we delete all the cells $A_i$ with $i\in J_1$. Using again \eqref{prop:lambdaABP4.2}, we see that newly generated partition is of the form 
$\{A_i\}_{i\not \in J_1}\cup\{\tilde A_{j_1}\}$, where $\tilde A_{j_1}=A_{j_1}\cup C_1$.  By the induction assumption it holds
\[
\sum_{i\not \in  J_1}\H^1( A_i\cap H^{\lambda}_{\nu_i}\cap\pa D)+\H^1(\widetilde A_{j_1}\cap H^{\lambda}_{\nu_{j_1}}\cap\pa D)\geq\varphi_H(\lambda) ,
\]
for all $\lambda\in[-1,1]$. Note now that by \eqref{C1} we have $\H^1(\widetilde A_{j_1}\cap H^{\lambda}_{\nu_{j_1}}\cap\pa D)=\H^1( A_{j_1}\cap H^{\lambda}_{\nu_{j_1}}\cap\pa D)$ for $\lambda>\tilde l$. Therefore, for all $\lambda>\tilde l$ we have
\[
\begin{split}
\varphi_{K}(\lambda)&\geq \sum_{i\not \in  J_1}\H^1( A_i\cap H^{\lambda}_{\nu_i}\cap\pa D)+\H^1(\widetilde A_{j_1}\cap H^{\lambda}_{\nu_{j_1}}\cap\pa D)\\
&=\sum_{i\not \in  J_1}\H^1( A_i\cap H^{\lambda}_{\nu_i}\cap\pa D)+\H^1( A_{j_1}\cap H^{\lambda}_{\nu_{j_1}}\cap\pa D)
\geq \varphi_H(\lambda).
\end{split}
\]
 Recalling \eqref{tilde l}, claim \eqref{calim} follows.  

Recalling the beginning of the proof and \eqref{calim}, we have proved the inequality  \eqref{prop:lambdaABP4} for $\lambda \in [-1, \Lambda] \cup [\overline M,1]$, where $\Lambda$ is defined in \eqref{maxentry}. To reach the conclusion, we need to fill the gap between $\Lambda$ and $\overline M$ (assuming without loss of generality $\Lambda<\overline M$). To this aim, we will show that 
\beq\label{twopointbis}
\sum_{i=1}^n\H^0(A_i\cap\pa H^{\lambda}_{\nu_i}\cap\pa D)\geq 2 \quad\text{for }\lambda\in (\Lambda, \overline M).
\eeq
Once the claim is established, arguing as in the previous case we may deduce  that $\varphi_K-\varphi_H$ is non-increasing in $(\Lambda, \overline M)$, and the conclusion then follows from \eqref{calim}.

Let now  $i_1\in J_1$, be such that $m_{i_1}=M_1$. Since $A_{i_1}$ is  non disconnecting, then by \eqref{pointprop} we have 
\[
\H^0(\pa D\cap \pa H^\lambda_{\nu_{i_1}}\cap A_{i_1})\geq 1 \quad\text{for }\lambda\in (\ell_{i_1}, M_1).
\]
Similarly, there exist $i_2\in J_2$ such that  $m_{i_1}=M_1$ and 
\[
\H^0(\pa D\cap \pa H^\lambda_{\nu_{i_2}}\cap A_{i_2})\geq 1 \quad\text{for }\lambda\in (\ell_{i_2}, M_2).
\]
The previous two inequalities yield \eqref{twopointbis}, recalling that $\ell_{i_1}$ and $\ell_{i_2}$ are less than or equal $\Lambda$, while $M_i\geq \overline M$ for $i=1, 2$. This concludes the proof of \eqref{prop:lambdaABP3} and in turn of Theorem~\ref{th:lambdaABP}.
 \end{proof}
 
\subsection{Proof of the main result}
In this section we finally prove Theorem~\ref{thm1}. We will need the following approximation lemma, whose proof  is given in the Appendix.

 To this aim we recall we recall that a sequence $\{C_n\}$ of closed sets of $\R^N$ converge in the {\em Kuratoswki sense} to a closed set $ C$ if the following conditions are satisfied:
\begin{itemize}
\item[(i)] if $x_n\in C_n$ for every $n$, then any limit point of $\{x_n\}$ belongs to $ C$;
\item[(ii)] any $x\in C$ is the limit of a sequence $\{x_n\}$ with $x_n\in C_n$.
\end{itemize}
One can easily see that $C_n\to C$ in the sense of Kuratowski if and only if dist$(\cdot, C_n)\to$ dist$(\cdot, C)$ locally uniformly in $\R^N$. 

\begin{lemma}\label{appro}
Let $\Co\subset\R^N$ be of the form $\C\times\R^{N-2}$, where $\C\subset\R^2$ is a closed convex set  of the plane with nonempty interior. Let $E\subset\R^N\setminus\Co$ be a   set of finite perimeter. Then there exist a sequence of  closed convex sets $\Co_n$  of the form $\Co_n=\C_n\times\R^{N-2}$, $\C_n\subset\R^2$,  which satisfy \eqref{Coreg}  and a sequence of open sets $\Om_n\subset\R^N\setminus\Co_n$ which satisfy \eqref{Omreg},  in fact $\pa \Co_n$ and  $\Sigma_n:=\pa\Om_n\setminus\Co_n$ are $C^\infty$, such that  the following hold:
\vskip 0.2cm \par\noindent
{\rm (i)}\, $\Co_n\to\Co$ in the Kuratowski sense, with $\Co\subset \Co_n$ for all $n$;
\vskip 0.2cm \par\noindent
{\rm (ii)}\, $|\Om_n\triangle E|\to0$ as $n\to\infty$ and  $\pa\Om_n\subset\big\{x:\,{\rm dist}(x,\pa E)<\frac{1}{n}\big\}$;
\vskip 0.2cm \par\noindent
 {\rm (iii)}\, $P(\Om_n;\R^N\setminus\Co_n)\to P(E;\R^N\setminus\Co)$;
 \vskip 0.2cm \par\noindent
  {\rm (iv)}\, $\H^{N-1}(\pa\Om_n\cap\pa\Co_n)\to\H^{N-1}(\pa^*E\cap\pa\Co)$.
  
  Moreover, if $E$ coincides with a bounded open set of finite perimeter $\Om\subset \R^N\setminus \Co$ such that 
$\pa\Om\setminus \Co$ is smooth, then we may construct the approximating sets in  such a way that in addition it holds: 
\vskip 0.2cm \par\noindent
  {\rm (v)}\, $\Om_n\setminus \Co_n=(\Om)_{\e_n}\setminus  \Co_n$ for a suitable sequence $\e_n\to 0$, where $(\Om)_{\e_n}$ denotes the $\e_n$-neighborhood of $\Om$.  
   \end{lemma}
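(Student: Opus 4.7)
The plan is to reduce the general finite-perimeter case to the smooth case (v) via a double approximation: first replace $E$ by a bounded open set with $C^\infty$ boundary lying in the complement of a slight enlargement of $\Co$, then apply the smooth-case construction with the smoothing $\Co_n$ of $\Co$, and finally extract a diagonal sequence.

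\emph{Smoothing of $\Co$.} Since $\C\subset\R^2$ is closed convex with nonempty interior, the distance function $d_\C:=\dist(\cdot,\C)$ is convex and $1$-Lipschitz. For a standard mollifier $\rho_\delta$ on $\R^2$, the convolution $d^\delta:=\rho_\delta * d_\C$ is smooth and convex with $\|d^\delta-d_\C\|_\infty\le C\delta$. By Sard's theorem, for any sequence $\delta_n\to 0$ we can pick a regular value $t_n\in(2C\delta_n,3C\delta_n)$ of $d^{\delta_n}$; the planar sublevel set $\C_n:=\{d^{\delta_n}\le t_n\}$ is convex, has $C^\infty$ boundary, strictly contains $\C$, and converges to $\C$ in the Kuratowski sense. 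Setting $\Co_n:=\C_n\times\R^{N-2}$ yields property (i). The product structure preserves smoothness of the full boundary $\pa\Co_n$.

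\emph{Smooth case (v).} Assume $E=\Om$ is bounded open in $\R^N\setminus\Co$ with $\Sigma=\pa\Om\setminus\Co$ of class $C^\infty$. The signed distance to $\pa\Om$ is smooth in a tubular neighborhood of $\Sigma$, so for small regular values $\e_n\to 0$ the neighborhood $(\Om)_{\e_n}$ has $C^\infty$ boundary near $\Sigma$. Choosing $\delta_n\ll\e_n$ and perturbing $\e_n$ slightly to enforce transversality between $\pa(\Om)_{\e_n}$ and $\pa\Co_n$, the open set $\Om_n:=(\Om)_{\e_n}\setminus\Co_n$ has $\Sigma_n=\pa\Om_n\setminus\Co_n$ a $C^\infty$ $(N-1)$-manifold with boundary, which gives \eqref{Omreg} and (v). Because the collar $\Co_n\setminus\Co$ has width $O(\delta_n)\ll\e_n$, both its volume and the $\H^{N-1}$-measure of its intersection with $\pa(\Om)_{\e_n}$ are negligible; properties (ii)--(iv) then follow from the smoothness of $\Sigma$ and $\pa\Co_n$ together with a direct identification of the contact region $\pa\Om_n\cap\pa\Co_n$ as (essentially) the nearest-point projection of $\Gamma=\pa\Om\cap\Co$ onto $\pa\Co_n$, whose $\H^{N-1}$-measure tends to $\H^{N-1}(\Gamma)$.

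\emph{General case and main obstacle.} For a set $E$ of finite perimeter in $\R^N\setminus\Co$, apply the classical mollification approximation for $BV$ functions to $\chi_E$ (see \cite{Maggi12}) to obtain bounded open sets $E_k$ with $C^\infty$ boundary satisfying $|E_k\triangle E|\to 0$, $P(E_k)\to P(E)$, and $E_k\subset\R^N\setminus\Co^{1/k}$, where $\Co^{1/k}$ is the $1/k$-enlargement of $\Co$ (we enforce this containment by mollifying $\chi_{E\setminus\Co^{1/k}}$ rather than $\chi_E$). Trace theory for $BV$ functions applied to the jump of $\chi_E$ across $\pa\Co$ gives moreover $\H^{N-1}(\pa E_k\cap\pa\Co^{1/k})\to\H^{N-1}(\pa^*E\cap\pa\Co)$. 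Applying the smooth case to each $E_k$ against a convex smoothing of $\Co^{1/k}$ produces sets $\Om_{k,n}$, and a diagonal sequence $\Om_n:=\Om_{k_n,n}$ satisfies (i)--(v). The main obstacle is precisely the contact-area convergence (iv): the upper bound requires preventing creation of spurious wetted area in the thin collar $\Co_n\setminus\Co$, while the lower bound relies on the absolute continuity of $|D\chi_E|$ with respect to $\H^{N-1}$ to conclude $|D\chi_E|(\{\dist(\cdot,\pa\Co)<\delta_n\})\to 0$; together these force a careful tuning of the parameters $\delta_n$, $\e_n$, and $k_n$ against one another, and make essential use of the convexity of $\Co$ (through the Lipschitz regularity and one-sided behaviour of $d_\C$).
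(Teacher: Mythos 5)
Your overall architecture matches the paper's: smooth convex enlargements of $\Co$ obtained from (signed) distance functions, mollification of the set, a Sard-type transversality argument, and a diagonal extraction; the special case (v) via $(\Om)_{\e_n}\setminus\Co_n$ is also the paper's construction. However, there is a genuine gap exactly where you locate the difficulty, namely in establishing (iii) and (iv) simultaneously, and the device that closes it in the paper is missing from your argument. Mollifying $\chi_{E\setminus\Co^{1/k}}$ is problematic: this function has a jump concentrated on $\pa\Co^{1/k}$ of total mass $\H^{N-1}(E^{(1)}\cap\pa\Co^{1/k})$, so the level sets of its mollification develop nearly flat boundary sheets straddling $\pa\Co^{1/k}$ whose area is comparable to the wetted area itself. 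Whether these sheets are counted as free perimeter or are swallowed by $\Co_n$ and replaced by wetted area depends on the interplay between the mollification radius and the gap between $\pa\Co^{1/k}$ and $\pa\Co_n$; "careful tuning" is asserted but not carried out, and this bookkeeping is the actual content of the lemma. The paper sidesteps the issue by first extending $\chi_E$ to a $BV$ function $u$ on all of $\R^N$ with $|Du|(\pa\Co)=0$ (\cite[Proposition~3.21]{AmbrosioFuscoPallara00}) and mollifying \emph{that}: then $P(U_{\e_n,t};\R^N\setminus\Co')\to P(E;\R^N\setminus\Co')$ for each approximating convex set $\Co'$ with no spurious mass near $\pa\Co$, and (iv) follows from the continuity of the $BV$ trace under strict convergence, combined with the weak-$*$ convergence $\H^{N-1}\res\pa\Co_{\sigma,s}^k\wstar\H^{N-1}\res\pa\Co_\sigma^k$ deduced from Kuratowski convergence, a monotone-convergence step in $\sigma$, and a dilation trick ($\Om\setminus(1+\sigma)\Co$ versus $(1+\sigma)^{-1}\Om\setminus\Co$) to transfer the trace identity from $\pa\Co_\sigma$ back to $\pa\Co$.

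A second, related inaccuracy: in your smooth case you identify the limit of the wetted areas with $\H^{N-1}(\Gamma)$, $\Gamma=\pa\Om\cap\Co$ the \emph{topological} contact set. Statement (iv) requires convergence to $\H^{N-1}(\pa^*E\cap\pa\Co)$, and for an open set of finite perimeter (even with $\pa\Om\setminus\Co$ smooth, since nothing is assumed about $\pa\Om\cap\Co$) the topological contact set can carry strictly more $\H^{N-1}$-measure than the reduced boundary. All limits must therefore be run through traces and reduced boundaries, together with a generic choice of the approximating convex sets guaranteeing $\H^{N-1}(\pa\Om\cap\pa\Co_{\sigma,s}^k)=0$, as the paper does; the "nearest-point projection of $\Gamma$" heuristic does not deliver (iv) as stated.
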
 

\begin{proof}[\textbf{Proof of Theorem~\ref{thm1}}] We begin by observing that by Proposition~\ref{th:ABP} and Theorem~\ref{th:lambdaABP} it follows that the inequality \eqref{main1} holds if $\Co$ and $\Omega\subset\R^N\setminus\Co$ satisfy \eqref{Coreg}   and   \eqref{Omreg} respectively.  In the general case, the inequality \eqref{main1} follows by approximation, see Lemma~\ref{appro}.

We now analyse the case of equality. Assume that $E$ is a set of finite perimeter  with $|E|=v$ for which equality in \eqref{main1} holds. Thus, in particular, $E$ is a minimizer of the isoperimetric problem \eqref{def:min-prob}. Without loss of generality, we may assume $v=|B^\lambda|$. 

We now  apply to  $E$ the Steiner symmetrisation of codimension $N-2$ with respect to the plane containing the section $\C$ of the cylinder $\Co$, see \cite{BCF13}. Recall that we write $x=(z, w)$, with $z\in \R^2$ (the plane containing $\C$) and $w\in \R^{N-2}$. For any $z\in \R^2$ we let $E_z:=\{w\in \R^{N-2}:\, (z,w)\in E\}$ be the corresponding $(N-2)$-slice of  $E$ and define the function 
\beq\label{mslice}
m(z):=\H^{N-2}(E_z). 
\eeq
and, correspondingly, we let $r(z)$ be the radius of the $(N-2)$-dimensional ball having $(N-2)$-measure equal to $m(z)$. Setting also  
\[
\pi(E):=\{z\in \R^2:\, m(z)>0\},
\]
we can define the Steiner symmetral of $E$ as
\[
E^S:=\{(z, w)\in \R^{N}:\, z\in \pi (E) \text{ and }|w|<r(z)\}.
\]
Clearly $|E^S|=|E|$.  Since $\pa^*E^S\cap \pa \Co$ is equivalent to $(\pa^*E\cap \pa \Co)^S$ we also have $\H^{N-1}(\pa^*E^S\cap \Co)=\H^{N-1}(\pa^*E\cap \Co)$. Finally,   
$P(E^S; \R^N\setminus \Co)\leq P(E; \R^N\setminus \Co)$ by \cite[Theorem~1.1]{BCF13} and thus $\J(E^S)=\J(E)=\J(B^\lambda)$.  In particular, also $E^S$ is a minimizer of   \eqref{def:min-prob}. Then $E^S$  is equivalent to a  bounded open set (see for instance \cite[Theorem~3.2]{FFLM22}, which is proven for $N=3$ but the same arguments holds in every dimension). It is also well known that volume-constrained minimizers of the perimeter are $(\Lambda, r_0)$-minimizers of the perimeter (see for instance \cite[Example 21.3]{Maggi12}).
Thus the regularity theory  for $(\Lambda, r_0)$-minimizers (see for instance \cite[Theorems~21.8~and~28.1]{Maggi12})  applies and yields that $\pa E^S\setminus \Co$ is  a smooth constant mean curvature manifold up to a closed singular set $\Sigma_{sing}\subset \pa\Om\setminus \Co$ of Hausdorff dimension less than or equal to $N-8$. But note now that if $\Sigma_{sing}$ is nonempty, then by  the rotational symmetry of $E^S$ in the $w$-plane it follows immediately that its Hausdorff dimension is at least $N-2$, which is impossible. 
Thus we have shown that $\pa\Om\setminus \Co$ is smooth.
Moreover, arguing as in the proof of Proposition~\ref{th:ABP}, see \eqref{concav}, we deduce that $\Om$ is connected. 

Let $\Om_n$ and $\Co_n$ be the two approximating sequences provided by Lemma~\ref{appro}. Note that we may enforce that  the approximating sets $\Om_n$ additionally satisfy property (v) of  the lemma.
Let $u_n$ be  the  variational solution of 
\[
\begin{cases}
&\Delta u_n = c_n \quad \text{in } \, \Omega_n \\
&\pa_\nu u_n = 1 \quad \text{on } \, \Sigma_n\\ 
&\pa_\nu u_n = -\lambda  \quad \text{on } \, \Gamma_n, 
\end{cases}
\]
where $\Sigma_n=\pa\Om_n\setminus \Co_n$, $\Gamma_n:=\pa\Om_n\cap\Co_n$ and 
\[
c_n = \frac{\H^{N-1}(\Sigma_n) - \lambda \H^{N-1}(\Gamma_n)}{|\Omega_n|}=\frac{J_{\lambda,\Co_n}(\Om_n)}{|\Omega_n|}.
\]
By Theorem~\ref{th:lambdaABP} the convex sets $\Co_n$ satisfy the $\lambda$-ABP property, therefore as in the proof of Proposition~\ref{th:ABP}, see \eqref{eq:cabre}, we have
\beq\label{eq:cabrebis}
\begin{split}
|\Om_n|\leq |B^{\lambda}|&\leq |\mathcal A_{u_n}\cap B_1|\leq |\nabla u_n(\Om_n^+)|\leq \int_{\Om^+_n}\det\nabla^2u_n\, dx\\
&\leq \int_{\Om^+_n}\frac{(\Delta u_n)^N}{N^N}\, dx\leq \frac{(J_{\lambda,\Co_n}(\Om_n))^N}{N^N|\Om_n|^N }|\Om_n^+|\leq \frac{(J_{\lambda,\Co_n}(\Om_n))^N}{N^N|\Om_n|^N }|\Om_n|,
\end{split}
\eeq
where 
\[
\Om_n^+:=\big\{x\in \Om_n: J_{\overline \Om_n}u_n(x)\neq\emptyset\text{ and }\nabla u_n(x)\in B_1\big\}.
\]
Note that by properties (ii), (iii) and (iv) of Lemma~\ref{appro} we have that $|\Om_n|\to |\Om|=|B^\lambda|$ and
\beq\label{qeq}
J_{\lambda,\Co_n}(\Om_n)\to J_{\lambda,\Co}(\Om)=J_{\lambda,\Co}(B^\lambda)=N|B^\lambda|.
\eeq
and thus
\beq\label{qeq2}
c_n\to c:= \frac{J_{\lambda,\Co}(B^\lambda)}{|B^\lambda|}=N.
\eeq
Let us observe that up to adding  constants, we may assume that each  $u_n$ vanishes at some point $x_n$ of $\Om_n^+$.
Thus,  setting $\xi_n=\nabla u_n(x_n)\in B_1$, we have
\[
u_n(y)\geq u_n(x_n)+\xi\cdot (y-x_n)=\xi\cdot (y-x_n)\geq -\mathrm{diam\,}(\Om_n)\quad \text{for all }y\in \Om_n
\]
and thus the $u_n$'s are uniformly bounded below. Since they solve the equation $\Delta u_n=c_n$, with $c_n$ in turn uniformly bounded, it follows from a standard Harnack inequality argument that 
\[
\sup_n \|u_n\|_{\Om'}<+\infty \quad\text{for all } \Om'\subset\!\subset \Om.
\]
In turn, recalling also \eqref{qeq2} and by standard elliptic regularity, we may assume that there exists $u\in C^\infty(\Om)$ such that  up to extracting a (non relabelled) subsequence
$$
\Delta u= N\, \text{ in }\Om \quad \text{ and }\quad u_n\to u \in C^\infty(\overline \Om')\, \text{ for all } \Om'\subset\!\subset \Om.
$$
Note now that by \eqref{qeq}, the inequalities in \eqref{eq:cabrebis} become equalities in the limit for $u$. In particular, $|\Om_n^+|\to |\Om|=|B^\lambda|$ and since $|\Om_n\Delta \Om|\to 0$, we have (up to a non relabelled subsequence)  
\[
\Chi{\Om_n^+}\to \Chi{\Om}\text{ a.e. }.
\]
 We may now argue  as in the proof of Proposition~\ref{th:ABP} to deduce that $|\nabla u(x)|\leq 1$ and 
 $\nabla^2u=I$ in $ \Om$ and thus, by the connectedness of $\Om$, there exist $x_0\in \R^N$ and $b\in \R$ such that
$$
 u(x)=\frac12|x-x_0|^2+b\quad\text{for all }x\in \Om.
$$

 We now study the boundary conditions satisfied by $u$. To this aim, let $B_r(x)\subset\!\subset \R^N\setminus \Co$. By (v) of Lemma~\ref{appro} we have that $\pa\Om_n\cap B_r(x)$  converge in $C^\infty$ to $\pa \Om\cap B_r(x)$. Since
 \beq\label{wform}
 \int_{B_r(x)\cap \Om_n }\nabla u_n\cdot \nabla\varphi=-c_n\int_{B_r(x)\cap \Om_n}\varphi\, dx+\int_{\pa \Om_n\cap B_r(x)}\varphi\, d\H^{N-1}\quad\text{for all }\varphi\in H^1_0(B_r(x)),
 \eeq
by a standard Caccioppoli Inequality argument and exploiting that   Trace Theorem holds on $\pa \Om_n\cap B_r(x)$ with uniformly bounded constants, we deduce  $\sup_n\|u_n\|_{H^1(\Om_n\cap B_r(x))}<+\infty$ and thus, in particular,  $u_n\wto u$ weakly in $H^1(\Om\cap B_r(x))$. Therefore, we can pass to the limit in \eqref{wform} to get
\[
\int_{B_r(x)\cap \Om }\nabla u\cdot \nabla\varphi=-N\int_{B_r(x)\cap \Om}\varphi\, dx+\int_{\pa \Om\cap B_r(x)}\varphi\, d\H^{N-1}\quad\text{for all }\varphi\in C^\infty_c(B_r(x)),
\]
 which yields $\pa_\nu u=1$  on $\pa\Om\cap B_r(x)$ and thus on $\pa\Om\setminus \Co$ by the arbitrariness of $B_r(x)$. 
  
 Note now that as $\Co_n\to \Co$ in the Kuratowski sense we have that the boundaries $\pa \Co_n$ are locally equi-Lipschitz.
Thus for any ball $B_r(x)$ such that $\overline B_r(x)\cap \pa \Om=(\Gamma\setminus \gamma)\cap \overline B_r(x)$, we have eventually  $\overline B_r(x)\cap \pa \Om_n=(\Gamma_n\setminus \gamma_n)\cap \overline B_r(x)$ and 
 we may extend each  $u_n$ to a function $\tilde u_n\in H^1(B_r)$ with equibounded $H^1$-norms. Therefore, up to a non ralebelled subsequence, we may assume $\tilde u_n\wto \tilde u$ in $H^1(B_r(x))$, with $\tilde u=u$ in $\Om\cap B_r(x)$. We may now argue similarly as before, to deduce  that $\pa_\nu u=-\lambda$  a.e. in  $\Gamma\setminus \gamma$.   
We may now  argue as in  the final part of the  proof of Proposition~\ref{th:ABP} to conclude that $\Om$ is a spherical cap isometric to $B^\lambda$ sitting on a facet of $\Co$.  

To conclude the theorem, we observe that the function $m$ defined in $\eqref{mslice}$ is smooth in $\R^2\setminus\mathcal C$ (as it coincides with that of the spherical cap). Hence, by \cite[Theorem~1.2 and Proposition~3.5]{BCF13} we may conclude that $E$ is equivalent to a translate of  $E^S=\Om$. This concludes the proof of the theorem.  
\end{proof}
\begin{remark}\label{empty} We observe here that Theorem~\ref{thm1} extends to the case convex cylinders with empty interior of the form $\Co=I\times\R^{N-2}$, where $I\subset\R$ is any closed interval. In this case the capillary energy must be defined as follows
\beq\label{empty1}
\J(E):=P(E; \R^N \setminus \Co) - \lambda\int_{\Co}\big(\mathrm{Tr}^+(\Chi{E})+\mathrm{Tr}^-(\Chi{E})\big)\,d\H^{N-1},
\eeq
where $\mathrm{Tr}^\pm(\Chi{E})$ denote the traces of the characteristic function $\Chi{E}$ on both sides of $\Co$, see for instance \cite[Theorem 3.77]{AmbrosioFuscoPallara00}. Indeed, this follows  easily after observing  that the right-hand side of \eqref{empty1} is the limit of $J_{\lambda,\Co_n}(E\setminus\Co_n)$, where $\Co_n$ denotes the closed $\frac{1}{n}$-neighborhood of $\Co$.
\end{remark}

\section{Appendix}

\begin{proof}[\textbf{Proof of Lemma~\ref{appro}}] Assume first that $E=\Om$, where $\Om$ is a bounded open set of finite perimeter. Let $B_R$ be a ball such that $\Om\subset\!\subset B_R$ and assume without loss of generality that $\C$ contains the origin of $\R^2$.
\par
Given $\sigma>0$  we begin by constructing a sequence of smooth convex sets $\C_{\sigma}^k\subset\R^2$,  with $\C\subset \C_{\sigma}^k$,  converging to $(1+\sigma)\C$ in the Kuratowski sense as $k\to\infty$ and such that  $(1+\sigma)\C\cap D_R\subset\C_{\sigma}^k\cap D_R$, where $D_R$ is the two-dimensional disk with radius $R$. Up to slightly dilating $\C_{\sigma}^k$ if needed, we may always assume that, setting $\Co_{\sigma}^k=\C_{\sigma}^k\times\R^{N-2}$, we have
\beq\label{appro1}
\H^{N-1}(\pa\Om\cap\pa\Co_{\sigma}^k)=0 \qquad \text{for all } \,  k,\sigma.
\eeq
We consider the  signed distance function ${\rm sd}_{\Co_{\sigma}^k}(x)$ from $\pa\Co_{\sigma}^k$,  which is a $C^\infty$ function in $O_{\sigma}^k=\{x:\,{\rm sd}_{\Co_{\sigma}^k}(x)>-\eta_{\sigma}^k\}$ for some $\eta_{\sigma}^k>0$. Consider the smooth convex sets $\Co_{\sigma,s }^k:=\{x:\,{\rm sd}_{\Co_{\sigma}^k(x)}\leq s\}$ for $s>-\eta_{\sigma}^k$. 

To approximate $\Om$ we first extend $\Chi{\Om}_{\big|_{\R^N\setminus\Co}}$ to a function $u\in BV(\R^N)$, with compact support, such that $|Du|(\pa\Co)=0$, $0\leq u\leq1$, see \cite[Proposition~3.21]{AmbrosioFuscoPallara00}. Note that for all $t\in(0,1)$, $\{u>t\}\setminus\Co=\Om$. For any $\e>0, t\in(0,1)$ we set $U_{\e,t}=\{x:\,u_\e(x)>t\}$, where $u_\e=\varrho_\e*u$, for a standard mollifier $\varrho_\e$. 
Note that for a.e. $t\in(0,1)$ there exists a sequence $\e_n$ converging to zero such that  and 
\begin{equation}\label{appro2}
\begin{split}
& \lim_{n\to\infty}|U_{\e_n,t}\triangle\{u>t\}|=0, \quad \lim_{n\to\infty}P(U_{\e_n,t})=P(\{u>t\}), \\
&\pa U_{\e_n,t}\subset\Big\{x:\,{\rm dist}(x,\pa\{u>t\})<\frac{1}{n}\Big\},
\end{split}
\end{equation}
see \cite[Theorem 3.42]{AmbrosioFuscoPallara00}. 
\par
Consider now the $C^\infty$ map  $x\mapsto ({\rm sd}_{\Co_{\sigma}^k}(x),u_\e(x))$ defined  for all $x\in O_{\sigma}^k$. By Sard's theorem we have that 
$$
{\rm rank}\bigg(\!\!
\begin{array}{c}
\nabla  {\rm sd}_{\Co_{\sigma}^k}(x) \\
\nabla u_{\e_n}(x)
\end{array}
\!\!
\bigg)=2 \quad\text{on $\big\{x:\,{\rm sd}_{\Co_{\sigma}^k}(x)=s,\,\,u_{\e_n}(x)=t\big\}$ for a.e. $(s,t)\in(0,\infty)\times (0,1)$.}
$$
Hence, we may fix from now on $t\in (0,1)$ satisfying \eqref{appro2} and such that the or a.e. $s>0$ the above rank condition holds for all $n$. Therefore for a.e. $s>0$ the open set $\Om_{\sigma,\e_n,s}^k=U_{\e_n,t}\setminus\Co_{\sigma,s}^k$ is a Lipschitz domain such that $\pa\Om_{\sigma,\e_n,s}^k\setminus \Co_{\sigma,s}^k$ is a $C^\infty$ manifold with boundary. Note that for any $\sigma$ and $k$ we have that for a.e. $s$, $\H^{N-1}(\pa\Om\cap\pa\Co_{\sigma,s}^k)=0$. Therefore for all such $s$, we have
\beq\label{appro2.5}
\begin{split}
\lim_{n\to\infty}P(\Om_{\sigma,\e_n,s}^k;\R^N\setminus\Co_{\sigma,s}^k)
&=\lim_{n\to\infty}P(U_{\e_n,t};\R^N\setminus\Co_{\sigma,s}^k)\\
&=P(\Om;\R^N\setminus\Co_{\sigma,s}^k)=P(\Om\setminus\Co_{\sigma,s}^k;\R^N\setminus\Co_{\sigma,s}^k).
\end{split}
\eeq
From the above convergence and the continuity of the trace Theorem for $BV$ functions, see \cite[Theorem~3.88]{AmbrosioFuscoPallara00} we have that
\beq\label{appro2.6}
\lim_{n\to\infty}\H^{N-1}(\pa\Om_{\sigma,\e_n,s}^k\cap\pa\Co_{\sigma,s}^k))=\H^{N-1}(\pa^*(\Om\setminus\Co_{\sigma,s}^k)\cap\pa\Co_{\sigma,s}^k)=\H^{N-1}(\Om\cap\pa\Co_{\sigma,s}^k),
\eeq
where the last equality follows from the fact that $\H^{N-1}(\pa\Om\cap\pa\Co_{\sigma,s}^k)=0$. Observe now that, since  $\Co_{\sigma,s}^k$ converge to $\Co_{\sigma}^k$ in the Kuratowski sense, as $s\to0$, we have in particular that $\H^{N-1}\res\pa\Co_{\sigma,s}^k\wstar\H^{N-1}\res\pa\Co_{\sigma}^k$, see for instance \cite[Remark~2.2]{FFLM22}. Therefore, thanks to \eqref{appro1} we conclude that $\H^{N-1}(\Om\cap\pa\Co_{\sigma,s}^k)\to\H^{N-1}(\Om\cap\pa\Co_{\sigma}^k)=\H^{N-1}(\pa^*(\Om\setminus\Co_{\sigma}^k)\cap\pa\Co_{\sigma}^k)$. Thus we have
\beq\label{appro3}
\begin{split}
&\lim_{s\to0}P(\Om\setminus\Co_{\sigma,s}^k;\R^N\setminus\Co_{\sigma,s}^k)=P(\Om\setminus\Co_{\sigma}^k;\R^N\setminus\Co_{\sigma}^k)\\
&\lim_{s\to0}\H^{N-1}(\pa^*(\Om\setminus\Co_{\sigma,s}^k)\cap\pa\Co_{\sigma,s}^k)=\H^{N-1}(\pa^*(\Om\setminus\Co_{\sigma}^k)\cap\pa\Co_{\sigma}^k).
\end{split}
\eeq
 By a similar argument, if $\sigma>0$ is such that $\H^{N-1}(\pa\Om\cap\pa(1+\sigma)\Co)=0$, setting $\Co_\sigma=(1+\sigma)\Co$ we have
\beq\label{appro4}
\begin{split}
&\lim_{k\to\infty}P(\Om\setminus\Co_{\sigma}^k;\R^N\setminus\Co_{\sigma}^k)=P(\Om\setminus\Co_\sigma;\R^N\setminus\Co_\sigma),\\
&\lim_{k\to\infty}\H^{N-1}(\pa^*(\Om\setminus\Co_{\sigma}^k)\cap\pa\Co_{\sigma}^k)=\H^{N-1}(\pa^*(\Om\setminus\Co_\sigma)\cap\pa\Co_\sigma).
\end{split}
\eeq
Finally, we note that by monotone convergence
\beq\label{appro5}
\lim_{\sigma\to0}P(\Om\setminus\Co_\sigma;\R^N\setminus\Co_\sigma)=\lim_{\sigma\to0}P(\Om;\R^N\setminus\Co_\sigma)=P(\Om;\R^N\setminus\Co).
\eeq
By scaling, this is equivalent to say that 
$$
\lim_{\sigma\to0}P\big(\big((1+\sigma)^{-1}\Om\big)\setminus\Co;\R^N\setminus\Co\big)=P(\Om;\R^N\setminus\Co).
$$
Therefore, the trace theorem again implies that 
\[
\begin{split}
&\lim_{\sigma \to0}\H^{N-1}(\pa^*(\Om\setminus\Co_\sigma)\cap\pa\Co_\sigma)\\
&=\lim_{\sigma\to0}(1+\sigma)^{N-1}\H^{N-1}\big(\pa^*\big((1+\sigma)^{-1}\Om\big)\setminus\Co\big)\cap\pa\Co)
=\H^{N-1}(\pa^*\Om\cap\pa\Co).
\end{split}
\]
From this equality, together with \eqref{appro2.5}-\eqref{appro5} we conclude, by a diagonal argument,  that there exist sequences $s_n\to0^+$, $k_n\to\infty$ and $\sigma_n\to0^+$ such that, setting $\Om_n=\Om_{\sigma_n,\e_n,s_n}^{k_n}$, $\Co_n=\Co^{k_n}_{\sigma_n,s_n}$, (i)--(iv) hold. 

If $E$ is now a general set of finite perimeter, the conclusion  follows through a further diagonal procedure by approximating $E$ with a sequence of bounded open sets $\{\Om_n\}$ contained in $\R^N\setminus \Co$ in such way that $|\Om_n\Delta E|\to 0$ and $P(\Om_n; \R^N\setminus \Co)\to P(E; \R^N\setminus \Co)$, which implies $\H^{N-1}(\pa\Om_n\cap\pa\Co_n)\to\H^{N-1}(\pa^*E\cap\pa\Co)$ by the continuity of the trace Theorem. 

Let us now consider the case where $E$ coincides with an open set of finite perimeter $\Om\subset \R^N\setminus \Co$ such that  $\pa\Om\setminus \Co$ is smooth. In this case we can simplify the approximation, by considering the signed distance function $\mathrm{sd}_\Om$ to  the boundary of $\Om$. By the smoothness  of $\pa\Om\setminus \Co$ and the properties of signed distance functions, we have that for all $\sigma>0$ there exists $\e(\sigma)>0$ such that $\mathrm{sd}_\Om$ is smooth in $(\pa\Om)_{\e(\sigma)}\setminus (1+\sigma)\Co$.
Here $(\pa\Om)_{\e(\sigma)}$ denotes the $\e(\sigma)$-neighborhood of $\pa \Om$.
 The idea is now 
to proceed as before, with $u_\e$ replaced by $\mathrm{sd}_\Om$,  the set $U_{\e, t}$ replaced by 
$(\Om)_\e:=\{x: \mathrm{sd}_\Om\leq \e\}$, and with  $\Om_{\sigma,\e,s}^k:=(\Om)_\e\setminus\Co_{\sigma,s}^k$. Note that again by Sard's theorem we have
$$
{\rm rank}\bigg(\!\!
\begin{array}{c}
\nabla  {\rm sd}_{\Co_{\sigma}^k}(x) \\
\nabla \mathrm{sd}_\Om(x)
\end{array}
\!\!
\bigg)=2 \quad\text{on $\big\{x:\,{\rm sd}_{\Co_{\sigma}^k}(x)=s,\,\,\mathrm{sd}_\Om(x)=\e\big\}$ for a.e. $(s,\e)\in(0,\infty)\times (0,\e(\sigma))$.}
$$  
Hence, for all such $(s, \e)$ the set  $\Om_{\sigma,\e,s}^k$ is a Lipschitz domain such that $\pa\Om_{\sigma,\e,s}^k\setminus \Co_{\sigma,s}^k$ is a $C^\infty$ manifold with boundary. Moreover, for a.e. $s>0$ and for all $k$ we can always find a sequence $\e_n\to 0$  such that $(s, \e_n)$ satisfies the above rank condition for all $n$. Along  this subsequence,  we easily get  that $\big|\Om_{\sigma,\e_n,s}^k\Delta (\Om\setminus\Co_{\sigma,s}^k)\big |\to 0$ 
and that
\eqref{appro2.5} and \eqref{appro2.6} hold.  We may now proceed as before to reach the conclusion.
\end{proof}

\section*{Aknowledgments} 
\noindent N.~F. and M.~M. have been supported by PRIN 2022 Project “Geometric Evolution Problems and Shape Optimization (GEPSO)”, PNRR Italia Domani, financed by European Union via the Program NextGenerationEU, CUP D53D23005820006.  N.~F. and M.~M.  are  members of the Gruppo Nazionale per l’Analisi
Matematica, la Probabilit\`a e le loro Applicazioni (GNAMPA), which is part of the Istituto
Nazionale di Alta Matematica (INdAM). V.~J.~was supported by the Academy of Finland grant 314227.

\bibliographystyle{siam}
\bibliography{isoconvex}

\end{document}